\documentclass[12pt]{article}

\usepackage{url}
\usepackage[normalem]{ulem}
\usepackage{amsmath}
\newcommand{\stkout}[1]{\ifmmode\text{\sout{\ensuremath{#1}}}\else\sout{#1}\fi}\usepackage{amssymb}
\usepackage{amscd}
\usepackage{tikz}
\usetikzlibrary{arrows,automata,positioning}

\usepackage{amsmath}
\usepackage{amsthm}
\usepackage{amsfonts}
\usepackage{bm}
\usepackage[normalem]{ulem}

\usepackage{tikz}
\usetikzlibrary{arrows,automata,positioning}
\usetikzlibrary {arrows.meta}

\newtheorem{Theorem}{Theorem}
\newtheorem{lemma}[Theorem]{Lemma}
\newtheorem{corollary}[Theorem]{Corollary}

\theoremstyle{remark}
\newtheorem{remark}[Theorem]{Remark}

\numberwithin{Theorem}{section}

\begin{document}
\title{The analogue of overlap-freeness for the Fibonacci morphism}
\author{James D.\ Currie \& Narad Rampersad\\
Department of Mathematics \&
Statistics\\
The University of Winnipeg\\
{\tt j.currie@uwinnipeg.ca, n.rampersad@uwinnipeg.ca}}
\maketitle
\begin{abstract}
A $4^-$-power is a non-empty word of the form $XXXX^-$, where $X^-$ is obtained from $X$ by erasing the last letter. A binary word is called {\em faux-bonacci} if it contains no $4^-$-powers, and no factor 11. We show that faux-bonacci words bear the same relationship to the Fibonacci morphism that overlap-free words bear to the Thue-Morse morphism. We prove the analogue of Fife's Theorem for faux-bonacci words, and characterize the lexicographically least and greatest infinite faux-bonacci words.
\vspace{.1in}

\noindent{\bf Mathematics Subject Classifications:} 68R15
\end{abstract}
\section{Introduction}
We study binary words, that is words over the alphabet ${\mathcal B}=\{0,1\}$. We use lower case letters (e.g., $w$) to denote finite words, and we  use 
bold-face letters to denote words with letters indexed by ${\mathbb N}$; e.g., $${\boldsymbol w}=w_1w_2w_3\cdots.$$ 
In the literature, words with letters indexed by ${\mathbb N}$ are variously referred to as $\omega$-words, infinite words, one-sided infinite words, etc. In this article we refer to them as {\em $\omega$-words}.
We freely use notions from combinatorics on words and from automata theory. Thus, for example, the set of finite words over ${\mathcal B}$ is denoted by ${\mathcal B}^*$, and the set of $\omega$-words  is denoted by ${\mathcal B}^\omega$.
We record morphisms inline, i.e., $g=[g(0),g(1)]$.

The binary overlap-free words constitute a classical object of study in combinatorics on words. They are particularly well understood because of their intimate connection to the Thue-Morse morphism $\mu=[01,10]$.
\begin{Theorem}\label{thue freeness} Let $w$ be binary word. Then $w$ is overlap-free if and only if $\mu(w)$ is overlap-free.
\end{Theorem}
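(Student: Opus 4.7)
My plan is to prove both directions of the equivalence separately.

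\emph{Easy direction.} If $w$ contains an overlap $axaxa$ (with $a$ a letter and $x$ a possibly empty word), then $\mu(w)$ contains $\mu(a)\mu(x)\mu(a)\mu(x)\mu(a) = (\mu(ax))^2\mu(a)$, a factor of period $2|ax|$ and length $4|ax|+2 \ge 2\cdot 2|ax|+1$, which itself contains an overlap.

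\emph{Hard direction (contrapositive).} Assume $\mu(w)$ contains an overlap, and let $u = axaxa$ be such a factor of minimum length, with $p = |ax|$. Parse $\mu(w)$ uniquely into length-two blocks $\mu(w_1)\mu(w_2)\cdots$, and call a position in $\mu(w)$ \emph{aligned} if it is the first letter of its block. Within each block the two letters differ, since $\mu(0)=01$ and $\mu(1)=10$. The argument then splits on the parity of $p$.

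If $p$ is odd, the plan is to derive a contradiction. The case $p=1$ gives $u = aaa$, impossible since $\mu(w)$ contains no triple-letter run. For $p \ge 3$ odd, every period-equality $u_i = u_{i+p}$ with $1 \le i \le p+1$ links an aligned position with an unaligned one, since the block parity shifts across distance $p$. Translating each into a relation between the underlying letters $w_j$ (the first letter of $\mu(w_j)$ being $w_j$, the second being $1-w_j$), I expect to force $w_j = w_{j+1} = \cdots$ over a range of indices while simultaneously forcing some $w_k$ in that range to equal $1-w_j$, a contradiction.

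If $p$ is even, I split further on whether $u$ starts at an aligned position. When it does, $u$ covers $p$ full blocks plus one stray letter, so $u = \mu(v)\cdot c$ with $|v|=p$; injectivity of $\mu$ applied to $u[1..p]=u[p+1..2p]$ forces $v = yy$ with $|y| = p/2$, and $u[1] = u[2p+1] = c$ gives $c = y_1$, so $yyy_1$ is an overlap of $w$. When $u$ starts unaligned, I truncate one letter from the left to obtain an aligned factor $u[2..2p+1] = \mu(v)$ with $v = yy$ a square, and then invoke the period relation $u[1] = u[p+1]$, which translates via the block structure to $w_{s/2} = w_{s/2+p/2} = y_{p/2}$, yielding an overlap of $w$ starting one position earlier. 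The principal obstacle will be the odd-period analysis: carefully combining the $p+1$ period-equalities across alternating block parities to reach a contradiction; the even-period case reduces to a routine use of $\mu$'s injectivity.
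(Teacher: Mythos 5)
The paper does not actually prove Theorem~\ref{thue freeness}: it is stated as a classical fact due to Thue and used as motivation, so there is no in-paper argument to compare yours against. Judged on its own terms, your plan is the standard block-parity proof and it does go through. The easy direction is fine: $\mu(a)$ is a prefix of $\mu(ax)$, so $(\mu(ax))^2\mu(a)$ has period $2|ax|$ and length $4|ax|+2\ge 2\cdot 2|ax|+1$. Both even-period subcases are correct as written; in the unaligned one, note that $u[1]$ and $u[p+1]$ sit at even positions and therefore carry the \emph{complements} of the underlying letters, but the two complements cancel and you still get $w_{j}=w_{j+p/2}$, i.e.\ the last letter of $y$ prepended to $yy$, which is an overlap. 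The only soft spot is the odd case with $p\ge 3$, which you leave as an expectation. It does work, but be aware that the mechanism differs slightly between alignments: writing $p=2m+1$, each full block $\{2j-1,2j\}$ inside the first period yields $w_{j+m}=w_{j+m+1}=\overline{w_j}$. If $u$ starts at an aligned position you get $m+1$ such blocks and the two forced constant runs overlap at index $j_0+m$, giving the contradiction exactly as you describe. If $u$ starts unaligned you get only $m$ interior blocks, the two constant runs are disjoint, and no contradiction arises from them alone; you must also invoke the two boundary equalities $u_1=u_{p+1}$ and $u_{p+1}=u_{2p+1}$ (which pair a half-block at one end of the period with a half-block at the other) to pin down $w_{j_0}$ and $w_{j_0+2m+1}$ and force the clash. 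So the odd case is fillable, but your write-up should not suggest that the interior block relations suffice by themselves.
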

Thue  \cite{thue12} proved that, for two-sided infinite words and for circular words, every overlap-free binary word arises as  the $\mu$ image of an overlap-free word. The analysis of finite words is more complicated, but these also arise via iterating $\mu$. (See, e.g.,  Restivo and Salemi \cite{restivo84}.)
\begin{Theorem}\label{thue factorization} Let $w\in{\mathcal B}^*$ be overlap-free. Then we can write $w=a\mu(u)b$, where $a, b\in\{\epsilon,0,00,1,11\}$, and $u$ is overlap-free. If $|w|\ge 7$ this factorization is unique. If ${\boldsymbol w}\in{\mathcal B}^\omega$ is overlap-free, then we can write ${\boldsymbol w}=a\mu({\boldsymbol u})$, for some overlap-free word ${\boldsymbol u}\in{\mathcal B}^\omega$ where $a\in\{\epsilon,0,00,1,11\}$.
\end{Theorem}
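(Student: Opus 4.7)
The plan rests on two structural observations. First, since $\mu(0)=01$ and $\mu(1)=10$, each $\mu$-block lies in $\{01,10\}$, so any factor $00$ or $11$ inside $\mu(v)$ straddles two consecutive $\mu$-blocks and begins at an even position of $\mu(v)$. Second, overlap-free binary words contain no cube $000$ or $111$, no alternating factor of length $5$ (since $01010$ and $10101$ are overlaps), and no factor of the form $1001001$ or $0110110$, both of which are overlaps of the shape $axaxa$ with $|a|=1$, $|x|=2$. These facts impose tight constraints on where $00$ and $11$ can appear in an overlap-free word.

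For the finite existence statement, I would consider three candidate factorizations of $w$, indexed by an offset $k=|a|\in\{0,1,2\}$: take $a$ to be the prefix of $w$ of length $k$ (so $k=2$ is allowed only when $w_1w_2\in\{00,11\}$, to keep $a\in\{\epsilon,0,00,1,11\}$); parse $w_{k+1}w_{k+2},w_{k+3}w_{k+4},\ldots$ as consecutive $\mu$-blocks from $\{01,10\}$; and place any length-$0$, length-$1$, or length-$2$ tail into $b$, with the proviso that a length-$2$ tail must itself lie in $\{00,11\}$. Call a factor $w_pw_{p+1}\in\{00,11\}$ a \emph{bad pair} at position $p$. Alignment $k$ fails precisely when some bad pair sits in the interior of its pair-grid, i.e.\ at a position $p$ with $p-k$ odd and $p$ too far from the right end to be absorbed into $b$. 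The key lemma is: for overlap-free $w$ with $|w|\ge 7$, at least one of the three alignments succeeds. The proof is a distance/parity analysis. Consecutive bad pairs lie at distance $\Delta\in\{2,3,4\}$: distances $5$ and $6$ embed the overlap $01010$ or $10101$, while $\Delta=3$ between two same-type bad pairs ($00,00$) or ($11,11$) lying strictly inside $w$ creates the factor $1001001$ or $0110110$, themselves overlaps. Hence all interior bad pairs have consecutive distances in $\{2,4\}$, so they share a common parity $\pi$. Selecting the alignment whose pair-grid avoids parity $\pi$ (and putting any bad pair at position $1$ or $n-1$ into $a$ or $b$) then succeeds.

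The decoded $u$ is overlap-free by Theorem \ref{thue freeness}: $\mu(u)$ is a factor of the overlap-free word $w$, so $\mu(u)$ is overlap-free, and hence so is $u$. For uniqueness when $|w|\ge 7$, two distinct valid offsets would force the two pair-grids to overlap in such a way that $w_i\ne w_{i+1}$ on a stretch of length at least $5$, producing the overlap $01010$ or $10101$ as a factor of $w$, a contradiction. For the $\omega$-case, apply the finite statement to every sufficiently long prefix of $\boldsymbol{w}$; uniqueness forces a common choice of $a$ and consistent decoded prefixes of $\boldsymbol{u}$, giving $\boldsymbol{u}\in\mathcal{B}^\omega$ with $\boldsymbol{w}=a\mu(\boldsymbol{u})$, whose overlap-freeness again follows from Theorem \ref{thue freeness}. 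The main obstacle, I expect, is the combinatorial analysis establishing the parity lemma for bad pairs, in particular ruling out $\Delta\in\{3,5,6\}$ inside the word and carefully handling the edge positions near $1$ and $n$.
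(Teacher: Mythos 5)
The paper does not actually prove this theorem: it is quoted in the introduction as a classical result, with the reader referred to Restivo and Salemi \cite{restivo84}, so there is no in-paper proof to compare against. Your proposal is, in substance, the standard proof from that literature, and its skeleton is sound: the observations that $00$ and $11$ can only occur straddling $\mu$-block boundaries, that overlap-freeness forbids $000$, $111$, $01010$, $10101$, $1001001$, $0110110$, and the resulting parity rigidity of the ``bad pair'' positions are exactly the right ingredients, and your deduction that interior consecutive bad pairs lie at distance $2$ or $4$ (distance $1$ gives a cube, distance $\ge 5$ gives an alternating factor of length $5$, distance $3$ forces $1001001$ or $0110110$ after the one-letter extensions on each side, which exist for interior occurrences) is correct. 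Two places deserve more care than your sketch gives them. First, in the uniqueness step, the na\"ive intersection of the two pair-grids for offsets $k\ne k'$ can be as short as $n-4$ positions when $n=7$, which is not yet length $5$; to reach an alternating stretch of length $\ge 5$ you must also use the block constraints of each grid on the positions \emph{outside} the common region and the requirement that a length-$2$ prefix $a$ or suffix $b$ lie in $\{00,11\}$ (a case check over the admissible pairs $(|a|,|b|)$ with $|a|+|b|\equiv n \pmod 2$ settles this). Second, the distance-$3$ exclusion genuinely fails at the boundary (e.g.\ $00100$ is an overlap-free prefix with bad pairs at positions $1$ and $4$, of opposite parity), so the absorption of a wrong-parity boundary pair into $a$ or $b$ is not merely an ``edge case'' but an essential part of the existence argument; you flag this, and it does work out, but it needs to be carried through explicitly. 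With those details supplied, your argument establishes the theorem.
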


Characterizations of binary overlap-free words in terms of $\mu$ have allowed sharp enumerations of these words \cite{restivo84,kobayashi88,carpi93,cassaigne93,jungers09,guglielmi13}. These enumerations are closely connected to  a classical result known as Fife's Theorem  \cite{fife80}.
\begin{Theorem}[Fife's Theorem]
Let ${\boldsymbol u}\in{\mathcal B}^\omega$. Then ${\boldsymbol u}$ is overlap-free if and only if $${\boldsymbol u}=w\bullet{\boldsymbol f}$$
for some $w\in \{01,001\}$, and some ${\boldsymbol f}\in \{\alpha,\beta,\gamma\}^\omega$ containing no factor in $$I=(\alpha+ \beta)(\gamma\gamma)^*(\beta\alpha+ \gamma\beta+\alpha\gamma).$$
\end{Theorem}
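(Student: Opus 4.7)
The plan is to prove Fife's Theorem by iterating the factorization theorem (Theorem~\ref{thue factorization}). Given an overlap-free $\boldsymbol{u}\in\mathcal{B}^\omega$, Theorem~\ref{thue factorization} yields $\boldsymbol{u}=a_0\mu(\boldsymbol{u}_1)$ where $\boldsymbol{u}_1$ is overlap-free and $a_0\in\{\epsilon,0,00,1,11\}$. Applying the theorem again to $\boldsymbol{u}_1$, and iterating, produces an infinite nested factorization in which, at each level, a short prefix is peeled off and the tail is the $\mu$-image of another overlap-free $\omega$-word. This is the skeleton on which the $\{\alpha,\beta,\gamma\}^\omega$-coding will live.

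I would next show that the five-fold choice $\{\epsilon,0,00,1,11\}$ collapses at every interior level to a three-fold choice, because the first letter of the tail at the outer level already forces the parity and flavour of the peeled-off prefix at the next level. At each interior level there are then exactly three valid moves, which we label $\alpha$, $\beta$, $\gamma$; the residual ambiguity at the top accounts for the two possible starting prefixes $w\in\{01,001\}$. The operator $\bullet$ reconstructs $\boldsymbol{u}$ from $w$ and the coding $\boldsymbol{f}$ by replaying this recursion.

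Identifying the forbidden factors $I=(\alpha+\beta)(\gamma\gamma)^*(\beta\alpha+\gamma\beta+\alpha\gamma)$ is the combinatorial heart of the argument. One explicitly tabulates, for each short word in $\{\alpha,\beta,\gamma\}^*$, the corresponding finite prefix produced by $\bullet$, and determines which of these prefixes contains an overlap (using Theorem~\ref{thue freeness} to pull conclusions back through $\mu$). The $(\gamma\gamma)^*$ middle factor reflects the observation that arbitrarily long runs of $\gamma$ are harmless on their own but merely postpone a conflict, which resurfaces once $\alpha$ or $\beta$ reappears in one of the contexts $\beta\alpha$, $\gamma\beta$, or $\alpha\gamma$; the asymmetry between the left and right factors of $I$ reflects the handedness of the overlap.

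The main obstacle is the converse direction: showing that every $\boldsymbol{f}\in\{\alpha,\beta,\gamma\}^\omega$ avoiding $I$ really codes an overlap-free word. I would argue by contradiction, assuming a shortest overlap $xyxyx$ in $\boldsymbol{u}=w\bullet\boldsymbol{f}$, and using Theorem~\ref{thue factorization} to align this overlap with the outermost $\mu$-blocks and descend to an overlap of strictly smaller length in $\boldsymbol{u}_1$. Iterating, after finitely many steps one reaches an overlap of bounded length whose preimage in $\boldsymbol{f}$ must be one of the short patterns captured by $I$, contradicting avoidance. The delicate part here is the alignment bookkeeping in the descent step; this is where the precise form of $I$---in particular the asymmetric right-hand factor $\beta\alpha+\gamma\beta+\alpha\gamma$---must be rederived from the ground up.
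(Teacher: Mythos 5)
First, a point of comparison: the paper does not prove this statement. Fife's Theorem is quoted as a classical result, with references to Fife \cite{fife80} and to \cite{rampersad07} for a modern exposition; the only in-paper benchmark for your outline is the proof of the paper's own analogue, Theorem~\ref{fife010}, which proceeds through Corollary~\ref{growing u}, Lemma~\ref{parse lemma}, Lemma~\ref{allouche} and the quotient identities of Lemma~\ref{identities}.

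Measured against that benchmark, your proposal is a plausible skeleton but has one structural misconception and one genuine gap. The misconception: you locate the trichotomy $\{\alpha,\beta,\gamma\}$ in a ``collapse'' of the prefix alphabet $\{\epsilon,0,00,1,11\}$ of Theorem~\ref{thue factorization}. But for an $\omega$-word that factorization is unique, so there is no five-fold choice to collapse, and the operators of Fife's Theorem do not act by peeling prefixes at all: as the paper itself notes, each of $\alpha,\beta,\gamma$ maps a finite word ending in $\mu^n(0)$ or $\mu^n(1)$ to one ending in $\mu^{n+1}(0)$ or $\mu^{n+1}(1)$, i.e.\ they extend on the right. The trichotomy comes from counting the ways the shortest prefix of ${\boldsymbol u}$ ending in a block of generation $n+1$ can extend the one of generation $n$ (the Thue--Morse analogue of Corollary~\ref{growing u}, where the count is two); your outline never makes this connection, nor does it define $\alpha,\beta,\gamma$ or $\bullet$ concretely enough for the promised tabulation to be carried out. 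The genuine gap is the derivation of $I$ together with the converse direction. Your minimal-counterexample descent with ``alignment bookkeeping'' is, as described, an unbounded case analysis: nothing in the outline explains why the forbidden patterns close up into the \emph{finite} regular expression $I=(\alpha+\beta)(\gamma\gamma)^*(\beta\alpha+\gamma\beta+\alpha\gamma)$. The missing idea is a finite-state organization of the admissible codes --- in the paper's analogue, a computational lemma (Lemma~\ref{parse lemma}) converts the action of a code prefix into an explicit morphism application, a two-case criterion (Lemma~\ref{allouche}(ii)) handles the extra leading letter, and finitely many left-quotient identities (Lemma~\ref{identities}) exhibit the code language as the paths of a finite automaton, from which the forbidden-factor expression is read off. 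Without some such mechanism, both the identification of $I$ and the sufficiency of avoiding it remain unproved.
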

In this theorem, each of $\alpha$, $\beta$, and $\gamma$ is an operator that maps a finite word ending in $\mu^n(0)$ or $\mu^n(1)$ to a finite word ending in $\mu^{n+1}(0)$ or $\mu^{n+1}(1).$ The second author's thesis \cite{rampersad07} contains a modern exposition of Fife's Theorem.

The Thue-Morse sequence ${\bf t}$ is a fixed point of $\mu$, namely, 
\[{\bf t}=\lim_{n\rightarrow\infty}\mu^n(0)\]
Due to Theorem~\ref{thue factorization}, ${\bf t}$ arises naturally in any study of overlap-free binary words. For example, Berstel \cite{berstel94} proved (See also Allouche {\em et al.} \cite{allouche98}):

\begin{Theorem}\label{lex}
The lexicographically greatest overlap-free binary $\omega$-word starting with 0 is 
${\bf t}$.
\end{Theorem}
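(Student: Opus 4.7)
The plan is to establish two facts: (i) $\boldsymbol{t}$ is itself overlap-free and begins with $0$ (the former a classical result of Thue, the latter immediate from $\boldsymbol{t}=\lim_n\mu^n(0)$), so $\boldsymbol{t}$ is a legitimate candidate; and (ii) every overlap-free $\boldsymbol{w}\in\mathcal{B}^\omega$ with $w_1=0$ satisfies $\boldsymbol{w}\le\boldsymbol{t}$ lexicographically. I would prove (ii) by contradiction and infinite descent. Suppose some overlap-free $\boldsymbol{w}$ with $w_1=0$ has $\boldsymbol{w}>\boldsymbol{t}$, and let $n$ be the least index where the two words disagree, so that $w_n=1$ and $t_n=0$.

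By Theorem~\ref{thue factorization}, write $\boldsymbol{w}=a\mu(\boldsymbol{u})$ with $\boldsymbol{u}$ overlap-free and $a\in\{\epsilon,0,00\}$ (the options $1,11$ are ruled out by $w_1=0$). The nontrivial prefixes can be disposed of quickly: if $a=00$, then $w_2=0<1=t_2$; if $a=0$, then $\boldsymbol{w}$ starts with $0\mu(u_1)$, i.e., either $001\cdots$ or $010\cdots$, both lex less than the prefix $011$ of $\boldsymbol{t}$. In each subcase $\boldsymbol{w}<\boldsymbol{t}$, contradicting the assumption.

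Hence $a=\epsilon$, and $\boldsymbol{w}=\mu(\boldsymbol{u})$ with $u_1=0$. Since $\boldsymbol{t}=\mu(\boldsymbol{t})$ and the coordinate rules $\mu(\boldsymbol{x})_{2k-1}=x_k$, $\mu(\boldsymbol{x})_{2k}=1-x_k$ apply to both, the agreement of $\boldsymbol{w}$ and $\boldsymbol{t}$ on the pair $\{2k-1,2k\}$ is equivalent to the agreement of $\boldsymbol{u}$ and $\boldsymbol{t}$ at position $k$. Thus the disagreement index $n$ must be odd (an even disagreement at $n=2k$ would force an earlier disagreement at $2k-1$); writing $n=2k-1$, we find that $\boldsymbol{u}$ and $\boldsymbol{t}$ first disagree at the strictly smaller index $k=(n+1)/2<n$, with $u_k=1>0=t_k$.

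Now iterate. At each stage I obtain an overlap-free $\omega$-word beginning with $0$, lex greater than $\boldsymbol{t}$, whose first-disagreement index is strictly smaller than at the previous stage; at each stage the factorization's prefix must again equal $\epsilon$, since the alternatives $a'\in\{0,00\}$ would already drive the current word below $\boldsymbol{t}$ by the analysis of the second paragraph. This produces a strictly decreasing sequence of positive first-disagreement indices, which must terminate; but the terminal case, first disagreement at index $1$, forces the current word to begin with $1$, contradicting the hypothesis that it begins with $0$. The main bookkeeping point is tracking how $\mu$ halves the first-disagreement index and verifying that the factorization's prefix is empty at every stage of the descent; once this is in place, the infinite descent closes cleanly and both (i) and (ii) together give the theorem.
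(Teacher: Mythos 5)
Your argument is correct. Note that the paper does not prove this statement at all: Theorem~\ref{lex} is quoted as a known result of Berstel (see also Allouche, Currie, and Shallit), so there is no in-paper proof to compare against. Your descent — ruling out the prefixes $a\in\{0,00\}$ in the factorization of Theorem~\ref{thue factorization} because they force the word below $011\cdots$, then using the coordinate rules $\mu(\boldsymbol{x})_{2k-1}=x_k$, $\mu(\boldsymbol{x})_{2k}=1-x_k$ together with $\mathbf{t}=\mu(\mathbf{t})$ to halve the first-disagreement index — is sound and is essentially the standard proof found in those references; the only point worth stating explicitly is that the disagreement index stays at least $2$ throughout (each word in the descent begins with $0=t_1$), which is what makes the terminal case a genuine contradiction.
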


Our rich understanding of binary overlap-free words comes from the strong connection between these words and the Thue-Morse morphism.  The thesis of the present article is that there exist analogous connections between other pairs of languages and morphisms. In a recent paper, the first author \cite{currie23} showed such a connection between the period-doubling morphism $\delta=[01,00]$ and {\em good words}. A binary word is {\em good} if it doesn't contain factors 11 or 1001, and doesn't encounter pattern 0000 or 00010100. He showed that:
\begin{itemize}
\item Good words factorize under $\delta$;
\item Word $\delta(w)$ is good if and only if $w$ is good;
\item An analog of Fife's Theorem holds for good $\omega$-words;
\item One can exhibit the lexicographically least and greatest good $\omega$-words.
\end{itemize}

Unfortunately, one may object that the period-doubling morphism does not give a proper `new' example of a language/morphism connection because of the 
close relationship of the period-doubling sequence ${\boldsymbol d}$ to the Thue-Morse sequence ${\boldsymbol t}$; it is well-known \cite{damanik00} that the period-doubling sequence can be obtained from the Thue-Morse sequence as follows: Let ${\boldsymbol {vtm}}$ be given by  $${\boldsymbol {vtm}}=g^{-1}({\boldsymbol t}),$$ where $g$ is the morphism on $\{0,1,2\}$ given by $g=[011,01,0]$. Then $${\boldsymbol d}=h({\boldsymbol {vtm}}),$$ where $h$ is the morphism on $\{0,1,2\}$ given by $h=[0,1,0]$. For this reason, in this article we consider another morphism, not connected to $\mu$ in the same way.

Another famous binary sequence is the Fibonacci word, which is the fixed point
\[{\boldsymbol \phi}=0100101001001010010100100101001001\cdots\]
of the binary morphism $\varphi$, where $\varphi=[01,0]$.  We call $\varphi$ the {\bf Fibonacci morphism}. The word ${\boldsymbol \phi}$ is central to the study of Sturmian words, and has a large literature. (See \cite{berstel86}, for example.)

For a non-empty word $X$, the word $X^-$ is obtained from $X$ by erasing its last letter. The word $^-X$ is obtained by erasing its first letter. We define a {\bf $4^-$-power} (said ``four minus''-power) to be a word of the form $XXXX^-$, some non-empty word $X$. Equivalently, a $4^-$-power is a word of period $p$, length $4p-1$ for some positive $p$. Extending periodically on the right or left with period $p$ by a single letter gives a fourth power. Thus a $4^-$-power $XXXX^-$ can also be written as $^-YYYY$, where $Y=aXa^{-1}$ and $a$ is the last letter of $X$.

A binary word is called {\em faux-bonacci} if it contains no factor 11 and no $4^-$-power. For the remainder of this paper we abbreviate `faux-bonacci' as `fb'. We will show that fb words bear the same relationship to the Fibonacci morphism that overlap-free words bear to the Thue-Morse morphism. We show that:
\begin{itemize} 
\item The fb words factorize under $\varphi$ (Theorem~\ref{main theorem}); \item Word $\varphi(w)$ is fb if and only if $w$ is fb (Theorem~\ref{phi fb}); 
\item An analog of Fife's Theorem holds for fb $\omega$-words (Theorem~\ref{fife010}); 
\item One can  exhibit the lexicographically least and greatest fb $\omega$-words (Theorem~\ref{lex least}).
\end{itemize}

These results raise the question of which well-studied properties of the Thue-Morse word ${\boldsymbol t}$  may generalize to the class of all morphic fixed points.
\section{Faux-bonacci words}

Unless otherwise specified, our words and morphisms are over the binary alphabet ${\mathcal B}=\{0,1\}$. 
\begin{lemma}\label{phi^-1 fb}
Let ${\boldsymbol u}\in{\mathcal B}^\omega$. Suppose $\varphi({\boldsymbol u})$ is fb. Then ${\boldsymbol u}$ is fb. \end{lemma}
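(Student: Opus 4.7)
The plan is to argue the contrapositive: starting from the assumption that ${\boldsymbol u}$ is not fb, I would produce a $4^-$-power in $\varphi({\boldsymbol u})$. A useful preliminary observation is that $\varphi({\boldsymbol u})$ automatically avoids the factor $11$: since $\varphi(0)=01$ and $\varphi(1)=0$ both begin with $0$, every occurrence of $1$ in $\varphi({\boldsymbol u})$ is immediately followed by $0$. Hence any failure of fb in $\varphi({\boldsymbol u})$ must come from a $4^-$-power, and it is this that I need to exhibit.

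First I would dispatch the case in which ${\boldsymbol u}$ contains the factor $11$. Since ${\boldsymbol u}$ is infinite, this $11$ extends on the right to $110$ or $111$; a direct computation gives $\varphi(111)=000$ and $\varphi(110)=0001$, both of which contain $000$, a $4^-$-power with $X=0$.

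The substantive case is where ${\boldsymbol u}$ contains a $4^-$-power $XXXX^-$. Here I would split on the last letter $a$ of $X$. The key identity is that, since $|\varphi(1)|=1$ and $|\varphi(0)|=2$, we have $\varphi(X^-)=\varphi(X)^-$ when $a=1$ and $\varphi(X^-)\cdot 0=\varphi(X)^-$ when $a=0$. In the case $a=1$, this gives $\varphi(XXXX^-)=\varphi(X)^3\varphi(X)^-$ directly, a $4^-$-power in $\varphi({\boldsymbol u})$ with block $\varphi(X)$. In the case $a=0$, I would inspect the letter $d$ of ${\boldsymbol u}$ immediately following $XXXX^-$. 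If $d=0$, then ${\boldsymbol u}$ actually contains the full power $XXXX$, so $\varphi({\boldsymbol u})$ contains $\varphi(X)^4$ and in particular the prefix $\varphi(X)^3\varphi(X)^-$. If $d=1$, then $\varphi(XXXX^-\cdot 1)=\varphi(X)^3\varphi(X^-)\cdot 0=\varphi(X)^3\varphi(X)^-$, again a $4^-$-power.

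The main obstacle to overcome is the non-uniformity of $\varphi$: when $a=0$, the naive image $\varphi(XXXX^-)$ has length $4|\varphi(X)|-2$, one letter too short to be a $4^-$-power of block $\varphi(X)$. The remedy is precisely to read one letter of context from ${\boldsymbol u}$, which either completes a fourth block (if $d=0$) or supplies the single missing letter via $\varphi(1)=0$ (if $d=1$). Once these sub-cases are checked, the contrapositive is established and the lemma follows.
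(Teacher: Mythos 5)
Your proposal is correct and follows essentially the same route as the paper's proof: argue the contrapositive, handle a factor $11$ by mapping $110$ and $111$ to words containing $000$, and handle a $4^-$-power $XXXX^-$ by splitting on the last letter of $X$, using one letter of right context in ${\boldsymbol u}$ to supply the missing $0$ when $X$ ends in $0$. The only difference is cosmetic (your explicit identities $\varphi(X^-)=\varphi(X)^-$ and $\varphi(X^-)0=\varphi(X)^-$ versus the paper's direct computation with $X=xa$), so there is nothing to add.
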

\begin{proof} We prove the contrapositive: Suppose ${\boldsymbol u}$ is not fb; we prove that $\varphi({\boldsymbol u})$ is not fb. 

If 11 is a factor of ${\boldsymbol u}$, then one of $\varphi(110)=0001$ and $\varphi(111)=000$ is a factor of $\varphi({\boldsymbol u})$. Each of these contains the $4^-$-power 000. 

Suppose $u$ contains an $4^-$-power $XXXX^-$ where $X=xa$ for some $a\in{\mathcal B}$. If $a=1$, then $\varphi(u)$ contains $\varphi(x1x1x1x)=
\varphi(x)0\varphi(x)0\varphi(x)0\varphi(x)$, which is an $4^-$-power. If $a=0$, then $u$ contains one of $x0x0x0x0$ and $x0x0x0x1$. In either case, $\varphi(u)$ contains the $4^-$-power $\varphi(x)01\varphi(x)01\varphi(x)01\varphi(x)0$.
\end{proof}

\begin{remark} \label{no 11} Let $w=w_1w_2w_3\cdots w_n$ with $w_i\in{\mathcal B}$ and suppose $|w|_{11}=0$. There is a unique word $u$ such that $0w=\varphi(u)$ for some $u$. 
Let ${\boldsymbol w}\in{\mathcal B}^\omega$ and suppose $|{\boldsymbol w}|_{11}=0$. Then we can write ${\boldsymbol w}=a\varphi({\boldsymbol u})$, some ${\boldsymbol u}\in{\mathcal B}^\omega$ where $a\in\{\epsilon,1\}$.
\end{remark}

\begin{Theorem}\label{main theorem}
Let ${\boldsymbol w}\in{\mathcal B}^\omega$ be fb. Then we can write ${\boldsymbol w}=a\varphi({\boldsymbol u})$, where ${\boldsymbol u}$ is fb and where $a\in\{\epsilon,1\}$.
\end{Theorem}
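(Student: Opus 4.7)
The plan is to combine the two preparatory results, Remark~\ref{no 11} and Lemma~\ref{phi^-1 fb}, by noting that the fb property is hereditary under taking suffixes.

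First I would invoke Remark~\ref{no 11}: since ${\boldsymbol w}$ is fb it contains no factor $11$, so we may write ${\boldsymbol w}=a\varphi({\boldsymbol u})$ for some ${\boldsymbol u}\in{\mathcal B}^\omega$ and some $a\in\{\epsilon,1\}$. This immediately furnishes the claimed shape of the factorization; what remains is to verify that the resulting ${\boldsymbol u}$ is itself fb.

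Next I would observe that $\varphi({\boldsymbol u})$ is either equal to ${\boldsymbol w}$ (if $a=\epsilon$) or the suffix of ${\boldsymbol w}$ obtained by deleting a single initial letter (if $a=1$). In either case $\varphi({\boldsymbol u})$ is a suffix of ${\boldsymbol w}$, and since the defining conditions of being fb (avoidance of $11$ and of $4^-$-powers) are properties of the factor set, they are inherited by every suffix. Hence $\varphi({\boldsymbol u})$ is fb.

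Finally, applying Lemma~\ref{phi^-1 fb} to the $\omega$-word ${\boldsymbol u}$ shows that ${\boldsymbol u}$ is fb, completing the argument. I do not anticipate a real obstacle here; the only subtlety is remembering that Lemma~\ref{phi^-1 fb} hands us fb-ness of ${\boldsymbol u}$ only from fb-ness of $\varphi({\boldsymbol u})$ itself, so one must pass through the intermediate step of noting that a suffix of an fb $\omega$-word is fb rather than trying to apply the lemma directly to ${\boldsymbol w}$.
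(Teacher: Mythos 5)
Your proposal is correct and is exactly the argument the paper intends: the paper's proof is the one-line statement that the theorem is immediate from Remark~\ref{no 11} and Lemma~\ref{phi^-1 fb}, and you have simply filled in the (correct) intermediate observation that $\varphi({\boldsymbol u})$, being a suffix of the fb word ${\boldsymbol w}$, is itself fb before applying the lemma.
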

\begin{proof}This is immediate from Lemma~\ref{phi^-1 fb} and Remark~\ref{no 11}.\end{proof}

\begin{Theorem}\label{phi fb}
Let ${\boldsymbol w}\in{\mathcal B}^\omega$. Then $\varphi({\boldsymbol w})$ is fb if and only if ${\boldsymbol w}$ is fb.
\end{Theorem}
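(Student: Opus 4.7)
One direction of the theorem, ``$\varphi(\boldsymbol w)$ fb implies $\boldsymbol w$ fb,'' is exactly Lemma~\ref{phi^-1 fb}. For the converse, I argue by contrapositive: assume $\varphi(\boldsymbol w)$ is not fb; I produce a $4^-$-power factor of $\boldsymbol w$. Note that $\varphi(\boldsymbol w)$ automatically avoids the factor $11$, since $\varphi(\mathcal B)=\{0,01\}$ and both blocks begin with $0$: every $1$ in $\varphi(\boldsymbol w)$ is the second letter of a $\varphi(0)$ block and is immediately followed by the $0$ beginning the next block. So the only obstruction to analyze is the presence of a $4^-$-power.

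The key observation, immediate from the shape of $\varphi$, is that a position $i$ of $\varphi(\boldsymbol w)$ is the start of a $\varphi$-block if and only if $\varphi(\boldsymbol w)[i]=0$. From this I deduce a decoding lemma: if $\varphi(u)$ occurs as a factor of $\varphi(\boldsymbol w)$ at a block-aligned (equivalently, $0$-carrying) starting position, then $u$ is a factor of $\boldsymbol w$, since the block decompositions line up left-to-right and $\varphi$ is injective.

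Now let $XXXX^-$ be a $4^-$-power of period $p$ in $\varphi(\boldsymbol w)$ beginning at position $j$, and split on the first letter $a$ and last letter $b$ of $X$. The subcase $a=b=1$ is immediate: $XX$ contains $11$, impossible. If $a=0$ then $j$ is block-aligned; when $b=0$ the letter following the first copy of $X$ is $a=0$, forcing the last block of $X$ to be the singleton $\varphi(1)$, so $X=\varphi(q)$ with $q$ ending in $1$ and $XXXX^-=\varphi(qqqq^-)$; when $b=1$ the trailing $01$ of $X$ is a $\varphi(0)$ block, so $X=\varphi(q)$ with $q$ ending in $0$ and $XXXX^-=\varphi(qqqq^-\cdot 1)$. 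In either subcase the decoding lemma produces $qqqq^-$ as a $4^-$-power factor of $\boldsymbol w$. The subcase $a=1$, $b=0$ requires a leftward shift: $\varphi(\boldsymbol w)[j-1]$ is forced to be $0$ (otherwise $11$ appears), and $0\cdot XXXX^-$ has length $4p$ and period $p$, so it is a fourth power $Y^4$ with $Y=0X^-$. Using $b=0$, one checks that $Y$ begins and ends at block boundaries, hence $Y=\varphi(r)$, so $r^4$---and in particular the $4^-$-power $rrrr^-$---is a factor of $\boldsymbol w$.

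The trickiest step is this subcase $a=1$, $b=0$: one must verify both that the shift preserves periodicity (using $b=0$) and that the shifted period $Y=0X^-$ spans an integer number of $\varphi$-blocks at both ends (using the identification of block starts with $0$-positions). The other cases follow quickly from the decoding lemma, so this shifting analysis is where the real work lies.
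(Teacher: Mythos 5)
Your overall strategy is essentially the paper's: classify the $4^-$-power $XXXX^-$ by the first and last letters of $X$, use the absence of $11$ to shift the occurrence so that it aligns with $\varphi$-blocks, and pull the repetition back through $\varphi$. (The paper argues the forward implication directly rather than by contraposition, but the case analysis is the same in substance.) However, your ``decoding lemma'' is false as stated, and this creates a real gap. Because $\varphi(1)=0$ is a prefix of $\varphi(0)=01$, the set of blocks is not a prefix code: a factor $\varphi(u)$ occurring at a block-aligned position need not \emph{end} at a block boundary, so left-to-right alignment is not automatic. Concretely, take ${\boldsymbol w}=000\cdots$, so $\varphi({\boldsymbol w})=010101\cdots$; then $\varphi(1)=0$ occurs at a block-aligned position, but $1$ is not a factor of ${\boldsymbol w}$. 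The correct statement needs synchronization at both ends: the position immediately after the occurrence must also carry a $0$ (equivalently, the occurrence must end at a block end). You in fact verify exactly this kind of two-sided alignment for $Y$ in your $a=1$, $b=0$ subcase, but you never impose it in the lemma you rely on elsewhere.

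The place where this bites is the subcase $a=b=0$, where $X=\varphi(q)$ with $q$ ending in $1$, so $qqqq^-$ ends in the second-to-last letter of $q$. If $q$ ends in $01$ the occurrence of $\varphi(qqqq^-)$ is right-synchronized and your conclusion is fine; but if $q$ ends in $11$ (e.g.\ $X=\varphi(011)=0100$, giving $XXXX^-=010001000100010$), or if $q=1$ (i.e.\ $X=0$, $XXXX^-=000$), then $qqqq^-$ ends in $1$ and its claimed occurrence in ${\boldsymbol w}$ can terminate mid-block, so $qqqq^-$ need not be a factor of ${\boldsymbol w}$ at all. The theorem still holds in these cases, but for a different reason: each of them forces $000$ to be a factor of $\varphi({\boldsymbol w})$, and two consecutive $0$-blocks $\varphi(1)\varphi(1)$ force $11$ to be a factor of ${\boldsymbol w}$. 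The clean repair is to dispose of this first --- ``if $000$ is a factor of $\varphi({\boldsymbol w})$ then $11$ is a factor of ${\boldsymbol w}$ and we are done; otherwise $000$ is not a factor'' --- which is exactly the standing assumption the paper's direct-proof formulation makes available; with $000$ excluded, the problematic sub-subcases disappear and your right-end synchronization can be checked in each remaining case. As written, though, the proposal asserts a false lemma and, in one branch, a conclusion that can fail, so it is not yet a complete proof.
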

\begin{proof} The only if direction is Lemma~\ref{phi^-1 fb}. Suppose then that ${\boldsymbol w}$ is fb. Certainly $\varphi({\boldsymbol w})$ cannot contain 11 as a factor.
Further, if 000 is a factor of $\varphi({\boldsymbol w})$, then one of 110 and 111 is a factor of ${\boldsymbol w}$; however, ${\boldsymbol w}$ is fb, so this is impossible.
 
Suppose that $\varphi({\boldsymbol w})$ has a factor $XXXx$ where $X=xa$, some $a\in{\mathcal B}$. 

If $a=1$ then $XX=x1x1$. Since 11 is not a factor of $\varphi({\boldsymbol w})$, $x$ must be non-empty and have first letter 0. Write $X=\varphi(Y0)$. Then $XXXx=\varphi(Y0Y0Y0Y)0$, and ${\boldsymbol w}$ contains the overlap  $Y0Y0Y0Y$. This is a contradiction, since ${\boldsymbol w}$ is fb.

Assume then that $a=0$. If the first letter of $X$ is 1, then the factor 
$XXXx=x0x0x0x$ of $\varphi(w)$ must appear in the context $0x0x0x0x$. Word $x$ cannot be empty, since 000 is not a factor of $\varphi({\boldsymbol w})$.
If the last letter of $x$ is 1, then replacing $X$ by $0x$ reduces to the previous case. Suppose then that the last letter of $x$ is 0. Write $x=x'0$. Then $XXXx=x'00x'00x'00x'0$. Since 000 is not a factor of $\varphi({\boldsymbol w})$, $x'$ is non-empty and starts and ends with 1. This implies that $\varphi({\boldsymbol w})$ contains the factor
$0XXXx'=0x'00x'00x'00x'$. Write $0x'0=\varphi(Y1)$. Then
$0x'00x'00x'00x'=\varphi(Y1Y1Y1Y)$, and the $4^-$-power $Y1Y1Y1Y$ is a factor of ${\boldsymbol w}$. This is impossible.
\end{proof}

\begin{corollary} The Fibonacci word ${\boldsymbol \phi}$ is fb.
\end{corollary}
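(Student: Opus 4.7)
The plan is to reduce the statement to finite approximations. Since $\boldsymbol{\phi}=\lim_{n\to\infty}\varphi^n(0)$ and each $\varphi^n(0)$ is a prefix of $\varphi^{n+1}(0)$, every finite factor of $\boldsymbol{\phi}$ appears in $\varphi^n(0)$ for all sufficiently large $n$. So it suffices to show by induction on $n$ that each $\varphi^n(0)$ is fb. Note that a direct appeal to Theorem~\ref{phi fb} via $\boldsymbol{\phi}=\varphi(\boldsymbol{\phi})$ yields only the tautology ``$\boldsymbol{\phi}$ is fb iff $\boldsymbol{\phi}$ is fb''; to conclude $\boldsymbol{\phi}$ is fb \emph{ab initio} I need a genuine fb base object, and the finite word $\varphi^0(0)=0$ supplies one.

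The base case is immediate: $0$ contains no factor $11$ and no $4^-$-power. For the inductive step I would invoke the finite-word analogue of Theorem~\ref{phi fb}: if $w\in\mathcal{B}^*$ is fb, then $\varphi(w)$ is fb. Granting this, $\varphi^{n+1}(0)=\varphi(\varphi^n(0))$ is fb whenever $\varphi^n(0)$ is, closing the induction.

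The main, mild obstacle is justifying the finite-word analogue of Theorem~\ref{phi fb}. I expect no new ideas are needed: inspecting the given proof, each step (ruling out $11$, then $000$, then longer $4^-$-powers via the descents $X=\varphi(Y0)$ and $X=\varphi(Y1)$) is a local assertion about factors and the internal shape of $\varphi$-images, and nowhere does the argument exploit that $\boldsymbol{w}$ is an $\omega$-word rather than a finite word. A careful re-reading therefore yields the finite-word statement, and the corollary follows.
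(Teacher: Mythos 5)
Your proof is correct, and your opening observation is a fair one: the paper states this corollary immediately after Theorem~\ref{phi fb} with no argument at all, yet applying that theorem to the identity $\boldsymbol{\phi}=\varphi(\boldsymbol{\phi})$ literally yields only the tautology you describe, so some extra input really is required. You supply it by inducting on the finite iterates $\varphi^n(0)$, which needs a finite-word analogue of Theorem~\ref{phi fb}; your claim that the published proof transfers to finite words is right, and the only points worth checking are the left-extension steps (the passage to the context $0x0x0x0x$, and the prepended $0$ before $x'$), which remain valid for finite $w$ because $\varphi(w)$ begins with $0$ and every $1$ in a $\varphi$-image is immediately preceded by a $0$, so the required left neighbour always exists. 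The derivation the authors presumably intend is slightly different and stays entirely inside $\omega$-words: the \emph{proof} (not the statement) of Theorem~\ref{phi fb} shows that a $4^-$-power of period $p\ge 2$ in $\varphi(\boldsymbol{w})$ forces a $4^-$-power of strictly smaller period in $\boldsymbol{w}$; taking $\boldsymbol{w}=\boldsymbol{\phi}=\varphi(\boldsymbol{\phi})$ and a $4^-$-power of minimal period in $\boldsymbol{\phi}$ gives an immediate contradiction by descent, the period-$1$ cases $000$ and $111$ being excluded directly. Either way one must reopen the proof of Theorem~\ref{phi fb} rather than merely cite it; your version has the advantage of producing a reusable finite-word form of the theorem (and incidentally establishes that arbitrarily long finite fb words exist), while the descent argument avoids introducing finite words at all.
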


\begin{lemma}\label{internal 10101} Suppose that ${\boldsymbol w}\in{\mathcal B}^\omega$ is fb. Then 10101 is not a factor of $^-{\boldsymbol w}$.
\end{lemma}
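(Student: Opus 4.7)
The plan is to argue by contradiction: suppose $10101$ occurs in $^-\boldsymbol{w}$, so there is an index $i \ge 2$ in $\boldsymbol{w}$ with $w_i w_{i+1} w_{i+2} w_{i+3} w_{i+4} = 10101$. The goal is to use the forbidden factor $11$ together with the infinitude of $\boldsymbol{w}$ to extend this occurrence on both sides into a short $4^-$-power.

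First I would look to the left. Since $w_i = 1$ and $\boldsymbol{w}$ contains no factor $11$, the letter $w_{i-1}$ (which exists because $i \ge 2$) must be $0$. Then I would look to the right: $\boldsymbol{w}$ is an $\omega$-word, so $w_{i+5}$ exists, and $w_{i+4} = 1$ together with the prohibition of $11$ forces $w_{i+5} = 0$.

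Putting the two forced letters together with the assumed occurrence gives the factor
\[
w_{i-1} w_i w_{i+1} w_{i+2} w_{i+3} w_{i+4} w_{i+5} = 0101010.
\]
This is a word of length $7 = 4\cdot 2 - 1$ and period $2$, i.e. $XXXX^-$ with $X = 01$. Hence $\boldsymbol{w}$ contains a $4^-$-power, contradicting that $\boldsymbol{w}$ is fb.

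There is essentially no obstacle: the entire argument is forced by the no-$11$ condition, and the only subtlety to get right is to use the hypothesis $i \ge 2$ (so that a left-neighbour exists) and the fact that $\boldsymbol{w}$ is infinite (so that a right-neighbour exists). The assumption $i \ge 2$ is precisely what is meant by being a factor of $^-\boldsymbol{w}$ rather than of $\boldsymbol{w}$, which is why the statement is formulated with $^-\boldsymbol{w}$ and not $\boldsymbol{w}$ itself.
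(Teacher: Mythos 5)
Your proof is correct and follows exactly the same approach as the paper's: extend the occurrence of $10101$ one letter to the left (possible since it lies in $^-\boldsymbol{w}$) and one to the right (possible since $\boldsymbol{w}$ is infinite), use the no-$11$ condition to force both neighbours to be $0$, and observe that $0101010$ is the $4^-$-power $XXXX^-$ with $X=01$. Your version merely makes the index bookkeeping explicit.
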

\begin{proof} Suppose 10101 is  a factor of $^-{\boldsymbol w}$. Since 11 is not a factor of ${\boldsymbol w}$, extending 10101 to the left and right we find that 0101010 is a factor of ${\boldsymbol w}$. But 0101010 is a $4^-$-power.
\end{proof}

\begin{lemma}\label{10101} Suppose $0101{\boldsymbol w}\in{\mathcal B}^\omega$ is fb. Then $10101{\boldsymbol w}$ is fb.
\end{lemma}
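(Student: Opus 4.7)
The plan is to verify that prepending the letter 1 to $0101\boldsymbol{w}$ introduces neither the factor 11 nor any $4^-$-power. The 11 check is immediate: since $0101\boldsymbol{w}$ begins with 0, the new prefix reads $10$, and $0101\boldsymbol{w}$ is fb by hypothesis.

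The main preparatory step is to pin down where $10101$ can occur in $10101\boldsymbol{w}$. Applying Lemma~\ref{internal 10101} to the fb word $0101\boldsymbol{w}$ shows that $10101$ is not a factor of ${}^-(0101\boldsymbol{w}) = 101\boldsymbol{w}$. Since $0101\boldsymbol{w}$ itself begins with $0$, it also cannot start with $10101$. Hence $10101$ is not a factor of $0101\boldsymbol{w}$ at all, which translated to $10101\boldsymbol{w}$ means that $10101$ occurs only as the very prefix of $10101\boldsymbol{w}$, at position 1.

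Now suppose, for contradiction, that $10101\boldsymbol{w}$ contains a $4^-$-power $XXXX^-$. Because $0101\boldsymbol{w}$ is fb, this factor cannot lie entirely in positions $\ge 2$, so it must be a prefix of $10101\boldsymbol{w}$. Let $|X| = p$. If $p = 1$, then $XXXX^- = 111$ contradicts the absence of 11. If $p \ge 2$, the prefix of length $4p - 1$ has period $p$, so the letter at position $p + i$ coincides with the letter at position $i$ whenever $p + i \le 4p - 1$; since $p \ge 2$ ensures $p + 5 \le 4p - 1$, applying this to $i = 1, \ldots, 5$ shows that positions $p + 1, \ldots, p + 5$ spell the same word as positions $1, \ldots, 5$, namely $10101$. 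This produces an occurrence of $10101$ at position $p + 1 \ge 3$, contradicting the preparatory observation.

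The main obstacle is noticing that Lemma~\ref{internal 10101}, although phrased about the interior of a fb word, can be pressed into pinning down the occurrences of $10101$ in $10101\boldsymbol{w}$. Once that is in hand, the periodicity argument disposes of all $p \ge 2$ uniformly, with no case split on the value of $p$.
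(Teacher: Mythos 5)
Your proof is correct and follows essentially the same route as the paper: reduce to a $4^-$-power prefix of period $p$, and use its periodicity together with Lemma~\ref{internal 10101} to produce a forbidden second occurrence of $10101$. The only difference is cosmetic --- the paper splits into $p\ge 5$ and $p\le 4$, while your observation that $p+5\le 4p-1$ for all $p\ge 2$ handles every nontrivial period uniformly, which is a slight streamlining.
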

\begin{proof}
If $10101{\boldsymbol w}$ is not fb it must begin with a $4^-$-power with some positive period $p$ and length $4p-1$. If $p\ge 5$, then 10101 is a factor of the fb word $^-0101{\boldsymbol w}$. This is impossible by Lemma~\ref{internal 10101}, so that $p\le 4$. Thus $p$ is a period of 10101, so that $p$ is 2 or 4. This forces 1010101 to be a prefix of $10101{\boldsymbol w}$, and again 10101 is a factor of the fb word $^-0101{\boldsymbol w}$.
\end{proof}
\section{An analogue of Fife's Theorem}
Let $U$ be the set of infinite fb words. For $u\in {\mathcal B}^*$, let $U_u=U\cap u{\mathcal B}^\omega$.
\begin{lemma}\label{allouche} Let ${\boldsymbol v}\in{\mathcal B}^\omega$.
\begin{enumerate}
\item[(i)] $\text{Word }\varphi({\boldsymbol v})\in U\iff {\boldsymbol v}\in U;$
\item[(ii)] $\text{Word }1\varphi({\boldsymbol v})\in U\iff 0{\boldsymbol v}\in U\text{ or } {\boldsymbol v}\in U_{00}.$
\end{enumerate}
\end{lemma}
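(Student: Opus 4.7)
Part (i) is immediate from Theorem~\ref{phi fb}, since ``$\boldsymbol{w}\in U$'' is by definition just ``$\boldsymbol{w}$ is fb''. For the ($\Leftarrow$) direction of part (ii), my plan is to split on the disjunct of the hypothesis. If $0\boldsymbol{v}$ is fb, then Theorem~\ref{phi fb} makes $\varphi(0\boldsymbol{v})=01\varphi(\boldsymbol{v})$ fb, and then $1\varphi(\boldsymbol{v})$ is fb as a suffix of an fb word. If instead $\boldsymbol{v}\in U_{00}$, write $\boldsymbol{v}=00\boldsymbol{v}'$, so that $1\varphi(\boldsymbol{v})=10101\varphi(\boldsymbol{v}')$; Theorem~\ref{phi fb} applied to $\boldsymbol{v}$ gives $\varphi(\boldsymbol{v})=0101\varphi(\boldsymbol{v}')$ fb, and Lemma~\ref{10101} upgrades this to $1\varphi(\boldsymbol{v})$ fb.

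For the ($\Rightarrow$) direction, I would assume $1\varphi(\boldsymbol{v})\in U$, so $\varphi(\boldsymbol{v})\in U$ as a suffix and hence $\boldsymbol{v}\in U$ by part (i). Assuming in addition that $\boldsymbol{v}\notin U_{00}$, the goal is to show $0\boldsymbol{v}\in U$. Since $\boldsymbol{v}$ already has no $11$ or $4^-$-power, the only way $0\boldsymbol{v}$ can fail to be fb is via a $4^-$-power prefix $XXXX^-$. The case $|X|=1$ forces $\boldsymbol{v}$ to start with $00$, contradicting $\boldsymbol{v}\notin U_{00}$, so I would then assume $|X|\geq 2$ and aim to contradict $1\varphi(\boldsymbol{v})\in U$.

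A short case analysis on the last letter of $X$ rules out $X$ ending in $0$: preventing the period $|X|$ from extending into a $4^-$-power factor of $\boldsymbol{v}$ itself forces the letter of $\boldsymbol{v}$ just past the prefix to be $1$, and combined with the shape of $X$ this yields either a factor $11$ (if $X$ ends in $10$) or a factor $000$ (if $X$ ends in $00$), each contradicting $\boldsymbol{v}$ fb. So $X$ ends in $1$; a direct calculation then gives $\varphi(XXXX^-)=YYYY^-$ where $Y:=\varphi(X)$ ends in $0$, and this is a $4^-$-power prefix of $\varphi(0\boldsymbol{v})=01\varphi(\boldsymbol{v})$.

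The main obstacle is then to transfer this $4^-$-power into $1\varphi(\boldsymbol{v})$ itself, not merely into $01\varphi(\boldsymbol{v})$. My plan is to show the $|Y|$-periodicity extends by one more character: the character immediately after $YYYY^-$ in $\varphi(0\boldsymbol{v})$ is the first letter of a $\varphi$-image and thus equals $0$, while the matching position dictated by periodicity is the last letter of $Y$, also $0$. Hence $YYYY$ is a prefix of $01\varphi(\boldsymbol{v})$; stripping the leading $0$ gives a prefix of $1\varphi(\boldsymbol{v})$ of length $4|Y|-1$ and period $|Y|$ — a $4^-$-power — contradicting $1\varphi(\boldsymbol{v})\in U$.
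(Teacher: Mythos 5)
Your proof is correct. Part (i), the ($\Leftarrow$) direction of (ii), and the opening reduction of the ($\Rightarrow$) direction (down to a $4^-$-power prefix $XXXX^-$ of $0\boldsymbol{v}$ with $X$ starting in $0$ and $|X|\ge 2$) coincide with the paper's argument, but the way you close out the hard direction is genuinely different and noticeably leaner. The paper pins down the fine structure of $X$: it shows $X$ ends in $01$, disposes of $X=01$ by a separate explicit computation, then shows $X$ ends in $101$ and (via Lemma~\ref{internal 10101}) must start with $00$, writes $X=00Y101$, and exhibits a concrete $4^-$-power $ZZZZ^-$ with $Z=101\varphi(Y)00100$ as a prefix of $1\varphi(\boldsymbol{v})$. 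You only need the last letter of $X$: your three-way check (letting the period extend puts a $4^-$-power in $\boldsymbol{v}$ itself; $X$ ending in $10$ forces a factor $11$; $X$ ending in $00$ forces a factor $000$) shows $X$ ends in $1$, whence $\varphi(XXXX^-)=YYYY^-$ with $Y=\varphi(X)$ ending in $0$; since every $\varphi$-image begins with $0$, the $|Y|$-periodicity extends one more letter, $YYYY$ is a prefix of $01\varphi(\boldsymbol{v})$, and deleting the initial $0$ leaves a prefix of $1\varphi(\boldsymbol{v})$ of length $4|Y|-1$ and period $|Y|$, which is a $4^-$-power by the paper's own equivalent characterization (indeed, unwinding the definitions, your ${}^-(YYYY)$ is exactly the paper's $ZZZZ^-$, as $Z$ is the conjugate of $Y$ obtained by moving its first letter to the end). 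What your route buys is uniformity: no special case for $X=01$, no appeal to Lemma~\ref{internal 10101}, and no need to determine the prefix of $X$; what the paper's route buys is an explicit factorization of the offending $4^-$-power into $\varphi$-blocks, in the style it reuses later for the Fife-automaton computations.
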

\begin{proof} Part (i) is Theorem~\ref{phi fb}. For part (ii), first suppose that 
$0{\boldsymbol v}\in U$ or ${\boldsymbol v}\in U_{00}.$
If $0{\boldsymbol v}\in U$, then by Theorem~\ref{phi fb} it follows that $01\varphi({\boldsymbol v})=\varphi(0{\boldsymbol v})$ is fb, so in particular $1\varphi({\boldsymbol v})$ is fb; if ${\boldsymbol v}\in U_{00}$, then $\varphi({\boldsymbol v})$ is fb by Theorem~\ref{phi fb} and has prefix $0101$, so that $1\varphi({\boldsymbol v})$ is fb by Lemma~\ref{10101}.

In the other direction, suppose that $1\varphi({\boldsymbol v})\in U$. To get a contradiction, suppose that $0{\boldsymbol v}\not\in U$ and ${\boldsymbol v}\not\in U_{00}.$ Since $1\varphi({\boldsymbol v})\in U$, we must have $\varphi({\boldsymbol v})\in U$, so that ${\boldsymbol v}\in U$ by Theorem~\ref{phi fb}. From $0{\boldsymbol v}\not\in U$ we deduce that a prefix of $0{\boldsymbol v}$ is not fb. Since $11$ cannot be a prefix of $0{\boldsymbol v}$, we deduce that $0{\boldsymbol v}$ has a prefix of the form $XXXX^-$ for some non-empty word $X$. It follows that $0$ is a prefix of $X$. Since ${\boldsymbol v}\not\in U_{00}$, we conclude that $|X|\ge 2$. Since $XX$ is a factor of the fb word ${\boldsymbol v}$, and $XX$ has prefix $X0$, we conclude that $00$ is not a suffix of $X$; otherwise ${\boldsymbol v}$  would have factor $000=0000^-$, which is impossible.

Let ${\boldsymbol v}$ have prefix $^-XXXX^-a$, where $a\in{\mathcal B}$. Letter $a$ cannot be the last letter of $X$, or ${\boldsymbol v}$ would start with the $4^-$-power $^-XXXX$. It follows that $X^-$ is followed in $^-XXXX^-a$ variously by $a$ and by the other letter of ${\mathcal B}$. This implies that the last letter of $X^-$ is $0$, since $1$ cannot be followed by $1$ in ${\boldsymbol v}$. Since $00$ is not a suffix of $X$, word $X$ must end in $01$.

We cannot, however, have $X=01$. In this case, word ${\boldsymbol v}$ would have prefix $^-XXXX^-0=1 01 01 0 0$, causing $1\varphi({\boldsymbol v})$ to have prefix $100100100101$, which begins with the $4^-$-power  $10010010010$. It follows that $|X|\ge 3$. Since $X^-$ is sometimes followed by $0$ in ${\boldsymbol v}$, word $X^-$ cannot have suffix $00$, since $000$ is not a factor of ${\boldsymbol v}$. As the last letter of $X^-$ is $0$, this implies that $X^-$ ends in $10$. Now $X$ ends in $101$, and $XX$ is a factor of ${\boldsymbol v}$. Thus Lemma~\ref{internal 10101} implies that $X$ does not start $01$. It follows that $X$ starts $00$. Write $X=00Y101$ and ${\boldsymbol v}=^-XXXX^-0{\boldsymbol u}$. Then
$$1\varphi({\boldsymbol v})=101\varphi(Y)0010 0101\varphi(Y)0010 0101\varphi(Y)0010 0101\varphi(Y)001 01 \varphi({\boldsymbol u})
$$
which begins with the $4^-$-power $ZZZZ^-$ where $Z=101\varphi(Y)0010 0$. This is a contradiction.
\end{proof}

Consider the {\em finite Fibonacci words} $F_n$ defined for non-negative integers $n$ by 
\begin{align*}
F_0&=0,\\
F_1&=01, \text{ and}\\
F_{n+2}&=F_{n+1}F_n\text{ for }n\ge 0.
\end{align*} 

The following lemma is proved by induction.

\begin{lemma}\label{n at least 2} Suppose that $u$ is a binary word and $p$ is a prefix of $\varphi(u)$. If $p$ ends in $F_{n+1}$, some $n\ge 2$, then $p=\varphi(q)$, where $q$ is a prefix of $u$ ending in $F_n$.
\end{lemma}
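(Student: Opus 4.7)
The plan is to induct on $n\ge 2$, with base case $n=2$ handled directly and the inductive step exploiting $F_{n+2}=F_{n+1}F_n$ to peel off an $F_n$-suffix and invoke the hypothesis.

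For the base case, $p$ ends in $F_3=01001$, whose final letter is $1$. In any image $\varphi(u)$ the letter $1$ can only occur as the second letter of a $\varphi(0)=01$ block (because $\varphi(1)=0$ and blocks always start with $0$), so position $|p|$ is forced to be a $\varphi$-block boundary in the canonical parsing of $\varphi(u)$. Hence $p=\varphi(q)$ for a unique prefix $q$ of $u$. Reading the terminal $01001$ of $\varphi(q)$ right-to-left through the parsing (the final $01$ is $\varphi(0)$, the preceding $0$ is $\varphi(1)$, and the next $01$ is another $\varphi(0)$) forces $q$ to end in $010=F_2$.

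For the inductive step, assume the statement for $n$ and let $p$ end in $F_{n+2}=F_{n+1}F_n$. Let $p_1$ denote the prefix of $p$ obtained by erasing the last $|F_n|$ letters; then $p_1$ is a prefix of $\varphi(u)$ ending in $F_{n+1}$, so by the inductive hypothesis $p_1=\varphi(q_1)$ for some prefix $q_1$ of $u$ ending in $F_n$. Writing $u=q_1u'$, we get $F_n=\varphi(F_{n-1})$ as a prefix of $\varphi(u')$. If we can conclude that $F_{n-1}$ is a prefix of $u'$, then $q:=q_1F_{n-1}$ is a prefix of $u$ ending in $F_nF_{n-1}=F_{n+1}$ and $p=\varphi(q_1)\varphi(F_{n-1})=\varphi(q)$, closing the induction.

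The crux is therefore the alignment claim that $\varphi(F_{n-1})$ being a prefix of $\varphi(u')$ forces $F_{n-1}$ to be a prefix of $u'$. For $n$ odd, $F_n$ ends in $1$, and the argument from the base case applies verbatim. For $n$ even, $F_n$ ends in $0$, and the only obstruction is the case in which this trailing $0$ is the first letter of a $\varphi(0)$ block in $\varphi(u')$; this forces $u'$ to begin with $F_{n-1}^-\cdot 0$, so that $u$ contains the factor $F_n\cdot F_{n-1}^-\cdot 0$. A short Fibonacci-recurrence check shows that this factor always contains the forbidden $4^-$-power $000$: for $n=2$ the factor is $010\cdot 00=01000$, and for $n\ge 4$ the word $F_{n-1}^-\cdot 0$ itself ends in $01000$ because $F_{n-1}$ (with $n-1\ge 3$ odd) ends in $F_3=01001$. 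Since $u$ is fb, this possibility is excluded and the alignment holds. The main obstacle is precisely this parity-dependent step: the odd case is immediate, while the even case relies on the fb hypothesis to rule out the alternative $\varphi$-parsing of $u'$.
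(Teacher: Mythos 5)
Your induction is structurally sound, and the base case and the parity analysis in the inductive step are both correct: the only place the block-parsing of $\varphi(u')$ can fail to align with $\varphi(F_{n-1})$ is when $F_{n-1}$ ends in $1$, and you correctly isolate the bad alternative $u'=F_{n-1}^-0\cdots$ and verify that it forces the factor $000$ in $u$. The paper offers no argument at all here (it only says ``proved by induction''), so there is nothing to compare against in detail; your write-up supplies the missing content.

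There is, however, one point you must confront squarely: your resolution of the even case appeals to the hypothesis that $u$ is fb, and that hypothesis does not appear in the statement of the lemma, which is asserted for an arbitrary binary word $u$. This is not a defect you can argue away, because the lemma as literally stated is \emph{false} without some such assumption. Take $u=01000$ and $n=3$: then $\varphi(u)=010010101$, its prefix $p=01001010$ equals $F_4$, yet the prefixes of $u$ have $\varphi$-images of lengths $0,2,3,5,7,9$, so no prefix $q$ of $u$ satisfies $\varphi(q)=p$ (the final $0$ of $p$ is the first letter of a split $\varphi(0)$-block). This is exactly the obstruction you identified, realized by a non-fb word. So what you have proved is the corrected statement ``if moreover $u$ is fb (or merely avoids $000$), then \dots,'' which is the version the paper actually uses: in its sole application the word being decomposed is always fb. You should state this added hypothesis explicitly in your lemma rather than invoking it silently mid-proof; as it stands there is a mismatch between what you claim and what you prove, even though your instinct about where the fb condition is needed is precisely right.
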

\begin{remark} The condition $n\ge 2$ is necessary. If $u=00$, then $F_2$ is a factor of $\varphi(u)$, but $F_1$ is not a factor of $u$.
\end{remark}

We use the notation $\pi(w)$ for the {\em Parikh vector} of a binary word. Thus
$$\pi(w)=[|w|_0,|w|_1].$$

 Suppose that $w$ is a word of the form $y_nF_n$ where $\pi(y_n)\le \pi(F_n)-\pi(0)$. We define operations on $w$ by
\begin{align*}
\alpha(w)&=y_nF_{n+1}\\
\beta(w)&=y_nF_{n-1}F_{n+1}.
\end{align*}

One checks that $F_n$ is a prefix of $F_{n-1}F_{n+1}$, so that $w$ is always a prefix of $\alpha(w)$ and $\beta(w)$.
Because $\pi(y_n)\le \pi(F_n)-\pi(0)$, we have that $\pi(y_nF_{n-1})\le
\pi(F_nF_{n-1})-\pi(0)=\pi(F_{n+1})-\pi(0)$, so that we can iterate the maps $\alpha$ and $\beta$. Let ${\mathcal O}=\{\alpha,\beta\}$. We define operators $w\bullet f$ for $f\in{\mathcal O}^*$ by
\begin{align*}
w\bullet\epsilon&=w\\
w\bullet (f\gamma)&=(w\bullet f)\bullet\gamma \text{ for }\gamma\in{\mathcal O}.
\end{align*}
\begin{remark}
If $g$ is a prefix of $f$, then $w\bullet g$ will be a prefix of $w\bullet f$.
\end{remark}
\begin{lemma} Let ${\boldsymbol u}$ be a fb $\omega$-word, and for $n\ge 2$, let $w_n=y_nF_n$ be the shortest prefix of ${\boldsymbol u}$ ending in $F_n$. For $n\ge 2$, we have $\pi(y_n)\le \pi(F_n)-\pi(0)$.
\end{lemma}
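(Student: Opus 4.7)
My plan is to prove this by induction on $n$, with base case $n=2$ handled by direct enumeration and the inductive step leveraging the factorization result (Theorem~\ref{main theorem}) together with Lemma~\ref{n at least 2}.

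For the base case $n=2$, where $F_2 = 010$, I would enumerate the possible fb prefixes of ${\boldsymbol u}$ up to the first occurrence of $010$. Since $11$ is forbidden and since $000 = 0000^-$ is a forbidden $4^-$-power, a short case analysis on the opening letters shows that $w_2 \in \{010,\,0010,\,1010,\,10010\}$, so $y_2 \in \{\epsilon,\,0,\,1,\,10\}$. Each such $y_2$ satisfies $\pi(y_2) \le [1,1] = \pi(F_2) - \pi(0)$.

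For the inductive step from $n$ to $n+1$ (with $n \ge 2$), I would apply Theorem~\ref{main theorem} to write ${\boldsymbol u} = a\varphi({\boldsymbol v})$ with $a \in \{\epsilon,1\}$ and ${\boldsymbol v}$ fb. Since $F_{n+1}$ begins with $0$, every occurrence of $F_{n+1}$ in ${\boldsymbol u}$ lies inside the $\varphi({\boldsymbol v})$ part, so $w_{n+1} = a p$ where $p$ is the shortest prefix of $\varphi({\boldsymbol v})$ ending in $F_{n+1}$. By Lemma~\ref{n at least 2}, $p = \varphi(q)$ for some prefix $q$ of ${\boldsymbol v}$ ending in $F_n$; moreover $q$ must itself be minimal with this property, because any shorter witness $q' \subsetneq q$ would push forward under $\varphi$ (using $\varphi(F_n) = F_{n+1}$) to contradict the minimality of $p$. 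Writing $q = y_n' F_n$ and applying the inductive hypothesis to ${\boldsymbol v}$ yields $\pi(y_n') \le \pi(F_n) - \pi(0)$.

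At this point $y_{n+1} = a\varphi(y_n')$, so if $\pi(y_n') = [r,s]$ with $r \le |F_n|_0 - 1$ and $s \le |F_n|_1$, then $\pi(\varphi(y_n')) = [r+s,\,r]$ and hence $\pi(y_{n+1}) \le [r+s,\,r+1]$. Reducing the desired bound $\pi(y_{n+1}) \le \pi(F_{n+1}) - \pi(0)$ to $r+s \le |F_{n+1}|_0 - 1$ and $r+1 \le |F_{n+1}|_1$, both inequalities follow from the standard Fibonacci-counting identities $|F_{n+1}|_j = |F_n|_j + |F_{n-1}|_j$ together with $|F_n|_1 = |F_{n-1}|_0$ (and hence $|F_{n+1}|_1 = |F_n|_0$), which are established by a routine induction on $n$. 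I expect the main subtlety to lie not in the arithmetic but in the pull-through step: namely, verifying that $\varphi$ sends the shortest $F_n$-terminated prefix of ${\boldsymbol v}$ precisely to the shortest $F_{n+1}$-terminated prefix of $\varphi({\boldsymbol v})$, since Lemma~\ref{n at least 2} only guarantees the existence of a preimage.
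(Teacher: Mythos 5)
Your proposal is correct and follows essentially the same route as the paper: the base case by enumerating $w_2\in\{010,0010,1010,10010\}$, and the inductive step via the factorization ${\boldsymbol u}=a\varphi({\boldsymbol v})$, Lemma~\ref{n at least 2}, and a Parikh-vector computation (the paper packages your coordinate arithmetic as $\pi(a\varphi(y'_k))\le\pi(1)+\pi(\varphi(F_k))-\pi(\varphi(0))=\pi(F_{k+1})-\pi(0)$). Your explicit justification of the minimality pull-through is a point the paper passes over more quickly, but it is the same argument.
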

\begin{proof}
Word $w_2$ is a fb word containing $F_2=010$ exactly once, as a suffix. The candidates  are $010$, $0010$, $1010$, and $10010$, which would yield $y_2=\epsilon$, $0$, $1$, and $10$, respectively. Thus the result is true for $n=2$. 

Suppose the result has been found to be true for $n=k$, some $k\ge 2$. Write ${\boldsymbol u}=a\varphi({\boldsymbol u'})$ where ${\boldsymbol u'}$ is fb, and $a\in\{\epsilon,1\}$. 
Since the first letter of $F_{k+1}$ is $0$ and $a\in\{\epsilon,1\}$, any occurrence of 
$F_{k+1}$ in ${\boldsymbol u}$ starts in $\varphi({\boldsymbol u'})$. By Lemma~\ref{n at least 2}, any prefix $qF_{k+1}$ of ${\boldsymbol u}$ has the form $a\varphi(q'F_k)$ where $q'F_k$ is a prefix of ${\boldsymbol u'}$. Thus 
$w_{k+1}=
a\varphi(w'_k)$ where $w'_k =y'_kF_k$ is the shortest prefix of ${\boldsymbol u'}$ ending in $F_k$. By the induction hypothesis, $\pi(y'_k)\le\pi(F_k)-\pi(0)$. It follows that
\begin{align*}
\pi(a\varphi(y'_k))&\le \pi(1)+\pi(\varphi(F_k))-\pi(\varphi(0))\\
&=\pi(1)+\pi(F_{k+1})-\pi(01)\\
&=\pi(F_{k+1})-\pi(0).
\end{align*}
\end{proof}
\begin{remark} Again, the condition $n\ge 2$ is necessary. If $w_2=1001$, then $y_2=10$, and $\pi(y_2)\not\le\pi(F_2)-\pi(0)$.
\end{remark}

\begin{lemma} Let ${\boldsymbol u}$ be a fb $\omega$-word, and for $n\ge 2$, let $w_n=y_nF_n$ be the shortest prefix of ${\boldsymbol u}$ ending in $F_n$. For $n\ge 2$, we have $w_{n+1}\in\{w_n\bullet\alpha,w_n\bullet\beta\}$.
\end{lemma}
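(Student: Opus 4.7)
The plan is to induct on $n \ge 2$, using the factorization ${\boldsymbol u} = a\varphi({\boldsymbol u'})$ from Theorem~\ref{main theorem} together with the identity $\varphi(F_k) = F_{k+1}$. As established in the proof of the previous lemma, for every $n \ge 2$ one has $w_{n+1} = a\varphi(w'_n)$, where $w'_k = y'_kF_k$ denotes the shortest prefix of the fb word ${\boldsymbol u'}$ ending in $F_k$.

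For the inductive step, assume $n \ge 3$, so that both $w_n = a\varphi(w'_{n-1})$ and $w_{n+1} = a\varphi(w'_n)$ are instances of this formula. The induction hypothesis, applied to the fb word ${\boldsymbol u'}$ at level $n - 1 \ge 2$, gives $w'_n \in \{y'_{n-1}F_n,\; y'_{n-1}F_{n-2}F_n\}$. In the first case,
\[w_{n+1} = a\varphi(y'_{n-1})\varphi(F_n) = a\varphi(y'_{n-1})F_{n+1},\]
and since $w_n = a\varphi(y'_{n-1})F_n$, we read off $y_n = a\varphi(y'_{n-1})$, whence $w_{n+1} = y_nF_{n+1} = w_n\bullet\alpha$. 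The second case is analogous, using $\varphi(F_{n-2}) = F_{n-1}$, and yields $w_{n+1} = y_nF_{n-1}F_{n+1} = w_n\bullet\beta$.

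The main obstacle is the base case $n=2$, which does not reduce to the induction hypothesis since the identity $w_n = a\varphi(w'_{n-1})$ requires $n \ge 3$ (it rests on Lemma~\ref{n at least 2}, whose hypothesis $n \ge 2$ fails at $n=1$). Nevertheless, $w_3 = a\varphi(w'_2)$ remains available, so I would enumerate the eight subcases arising from $w'_2 \in \{010, 0010, 1010, 10010\}$ and $a \in \{\epsilon,1\}$: in each, compute $w_3 = a\varphi(w'_2)$ explicitly, read off $y_2$ from the resulting prefix of ${\boldsymbol u}$, and verify $y_3 \in \{y_2, y_2F_1\}$. The expected outcome is that $w_3 = w_2\bullet\alpha$ when $w'_2 \in \{010,1010\}$ and $w_3 = w_2\bullet\beta$ when $w'_2\in\{0010,10010\}$, completing the argument.
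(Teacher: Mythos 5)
Your proof is correct and follows essentially the same induction as the paper's, with an identical inductive step based on the factorization $w_{n+1}=a\varphi(w'_n)$ and the identity $\varphi(F_k)=F_{k+1}$. The only difference is cosmetic: for the base case $n=2$ the paper directly enumerates the fb words beginning with $F_2$ and containing $F_3$ exactly once as a suffix (namely $F_3$ and $F_1F_3$), whereas you push the $\varphi$-factorization one level further and check the eight pairs $(a,w'_2)$; both are routine finite verifications and both succeed (your predicted outcome in the eight cases is indeed what one finds).
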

\begin{proof}
The only fb words starting with $F_2$, and containing $F_3$ exactly once, as a suffix, are $01001=010\bullet\alpha$ and $0101001=010\bullet\beta.$ Thus the result is true for $n=2$.

Suppose the result has been found to be true for $n=k$, some $k\ge 2$. Write ${\boldsymbol u}=a\varphi({\boldsymbol u'})$ where ${\boldsymbol u'}$ is fb, and $a\in\{\epsilon,1\}$. For each $n$, let $w'_n=y'_nF_n$ be the shortest prefix of ${\boldsymbol u'}$ ending in $F_n$. By the induction hypothesis, $w'_{k+1}$ is either
$w'_k\bullet\alpha = y'_kF_{k+1}$ or
$w'_k\bullet\beta = y'_kF_{k-1}F_{k+1}.$ As in the previous proof, 
$w_{k+2}=a\varphi(w'_{k+1})$ and $y_{k+2}=a\varphi(y'_{k+1})$. Then
\begin{align*}
w_{k+2}&=a\varphi(w'_{k+1})\\
&\in\{a\varphi(y'_kF_{k+1}),a\varphi(y'_kF_{k-1}F_{k+1})\}\\
&=\{y_{k+1}F_{k+2},y_{k+1}F_{k}F_{k+2}\}\\
&=\{w_{k+1}\bullet\alpha,w_{k+1}\bullet\beta\}.
\end{align*}
\end{proof}
\begin{corollary}\label{growing u} Let ${\boldsymbol u}$ be a fb $\omega$-word. There is an $\omega$-word ${\boldsymbol f}=\Pi_{n=2}^\infty f_n$, $f_n\in{\mathcal O}$, and a `seed' word $w_2\in\{010,0010,1010,10010\}$, such that 
$${\boldsymbol u}=\lim_{n\rightarrow\infty}w_2\bullet(f_2f_3\cdots f_n).$$
\end{corollary}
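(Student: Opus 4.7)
The plan is to iterate the two preceding lemmas. The first identifies the seed: by definition $w_2$ is the shortest prefix of ${\boldsymbol u}$ ending in $F_2=010$, and the case analysis given in that lemma's proof restricts $w_2$ to the list $\{010,0010,1010,10010\}$. The second lemma provides the recursive step: for every $n\ge 2$, at least one of $w_n\bullet\alpha$ or $w_n\bullet\beta$ equals $w_{n+1}$, so I may choose $f_n\in\mathcal{O}=\{\alpha,\beta\}$ with $w_{n+1}=w_n\bullet f_n$. Setting ${\boldsymbol f}=f_2f_3f_4\cdots$, I would then prove by induction on $n$, using the defining recursion $w\bullet(f\gamma)=(w\bullet f)\bullet\gamma$, that $w_{n+1}=w_2\bullet(f_2f_3\cdots f_n)$; the base $n=1$ is $w_2=w_2\bullet\epsilon$, and the inductive step peels off $f_n$ via the recursion.

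It then remains to verify that the sequence $(w_n)_{n\ge 2}$ converges to ${\boldsymbol u}$ as an $\omega$-word. By construction each $w_n$ is a prefix of ${\boldsymbol u}$, and the remark following the definitions of $\alpha$ and $\beta$ (that $w$ is a prefix of $\alpha(w)$ and of $\beta(w)$) shows that $w_n$ is a prefix of $w_{n+1}$. Since $|w_n|\ge|F_n|$ and the Fibonacci lengths $|F_n|$ grow without bound, these nested prefixes exhaust ${\boldsymbol u}$, which is exactly the assertion $\lim_{n\to\infty}w_2\bullet(f_2\cdots f_n)={\boldsymbol u}$.

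There is no real obstacle here: the corollary is a repackaging of the two preceding lemmas together with the definition of $\bullet$. The only point deserving explicit mention is the length-growth observation $|F_n|\to\infty$, which is what upgrades the nested sequence of fb prefixes into a convergent description of the entire $\omega$-word ${\boldsymbol u}$ rather than of a proper initial segment.
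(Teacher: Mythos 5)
Your proposal is correct and follows the same route as the paper, whose proof is the single sentence ``Letting the $w_n$ be as in the previous lemma, choose $f_n$ such that $w_{n+1}=w_n\bullet f_n$.'' You simply spell out the convergence argument (nested prefixes with $|F_n|\to\infty$) that the authors leave implicit.
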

\begin{proof} Letting the $w_n$ be as in the previous lemma, choose 
$f_n$ such that $w_{n+1}=w_n\bullet f_n$.
\end{proof}
Suppose that $w$ is a word of the form $y_nF_n$ where $\pi(y_n)\le \pi(F_n)-\pi(0)$. We note that 
\begin{align*}
&w\bullet\alpha=wF_n^{-1}F_{n+1}=wF_n^{-1}\varphi^{n-2}(F_2\bullet\alpha),\\
&w\bullet\beta=wF_n^{-1}F_{n-1}F_{n+1}=wF_n^{-1}\varphi^{n-2}(F_2\bullet\beta).
\end{align*}
If $f\in{\mathcal O}^k$, write $f=f_1f_2\cdots f_k$, where each $f_i\in{\mathcal O}$.  Suppose that $w$ is a word of the form $y_2F_2$ where $\pi(y_2)\le \pi(F_2)-\pi(0)$. By induction, 
$w\bullet f$ is a word of the form $y_{k+2}F_{k+2}$ where $\pi(y_{k+2})\le \pi(F_{k+2})-\pi(0)$. 
We find that
\begin{align}\label{parse f}
&~~~~~w\bullet f\nonumber \\
&=(w\bullet f_1f_2\cdots f_{k-1})\bullet f_k\nonumber \\
&=(w\bullet f_1f_2\cdots f_{k-1})F_{k+1}^{-1}\varphi^{k-1}(F_2\bullet f_k)\nonumber \\
&=(w\bullet f_1f_2\cdots f_{k-2})F_{k}^{-1}\varphi^{k-2}(F_2\bullet f_{k-1})F_{k+1}^{-1}\varphi^{k-1}(F_2\bullet f_k)\nonumber \\
&~~~~\vdots\nonumber \\
&=w\Pi_{j=1}^k F_{j+1}^{-1}\varphi^{j-1}(F_2\bullet f_j)
\end{align}

Let ${\boldsymbol f}\in{\mathcal O}^\omega$,
${\boldsymbol f}=f_1f_2f_3\cdots$, where each $f_i\in{\mathcal O}$. For $w\in\{010,0010,1010,10010\}$, define 
$$w\bullet{\boldsymbol f}=\lim_{n\rightarrow\infty}w\bullet(f_1f_2f_3\cdots f_n).$$

\begin{lemma}\label{parse lemma} Suppose $w\in\{010,0010,1010,10010\}$ and ${\boldsymbol f}\in{\mathcal O}^\omega$ and $g\in{\mathcal O}^k$. Suppose 
$F_2\bullet{\boldsymbol f}={\boldsymbol x}$. Then
\begin{equation}\label{parse}w\bullet(g{\boldsymbol f})=(w\bullet g) F_{k+2}^{-1}\varphi^k({\boldsymbol x}).\end{equation}
\end{lemma}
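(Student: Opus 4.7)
The plan is to reduce the infinite statement to the finite formula (\ref{parse f}) by passing to prefixes and then taking a limit. For each $n\ge 0$, let $h_n = g f_1f_2\cdots f_n \in {\mathcal O}^{k+n}$ be the length-$(k+n)$ prefix of $g{\boldsymbol f}$. Applying (\ref{parse f}) to $w\bullet h_n$ expands it as $w$ times a product of $k+n$ factors of the form $F_{j+1}^{-1}\varphi^{j-1}(F_2\bullet (h_n)_j)$. I split this product at $j=k$; the first $k$ factors reassemble, again by (\ref{parse f}), into $w\bullet g$, leaving a tail product over $j=k+1,\ldots,k+n$.

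The key maneuver is to recognize this tail as $\varphi^k$ applied to something familiar. Re-indexing with $i = j-k$, the tail becomes
\[
\prod_{i=1}^n F_{k+i+1}^{-1}\,\varphi^{k+i-1}(F_2\bullet f_i).
\]
Using the identity $\varphi(F_m) = F_{m+1}$, so $F_{k+i+1} = \varphi^k(F_{i+1})$, together with $\varphi^{k+i-1} = \varphi^k\circ\varphi^{i-1}$, each factor rewrites as $\varphi^k(F_{i+1})^{-1}\,\varphi^k(\varphi^{i-1}(F_2\bullet f_i))$. One checks (as in the justification of (\ref{parse f}) itself) that $F_{i+1}$ is a prefix of $\varphi^{i-1}(F_2\bullet f_i)$, so $\varphi^k$ can be pulled through the left division, and since $\varphi^k$ is a morphism it then distributes across the product. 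The tail equals $\varphi^k\!\left(\prod_{i=1}^n F_{i+1}^{-1}\varphi^{i-1}(F_2\bullet f_i)\right)$.

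Now I invoke (\ref{parse f}) one more time, this time with the seed $w = F_2$ (for which $y_2 = \epsilon$ trivially meets the Parikh hypothesis), to identify the inner product as $F_2^{-1}\bigl(F_2\bullet(f_1\cdots f_n)\bigr)$. Thus
\[
w\bullet h_n \;=\; (w\bullet g)\,\varphi^k\!\left(F_2^{-1}\bigl(F_2\bullet(f_1\cdots f_n)\bigr)\right).
\]
Letting $n\to\infty$, the argument of $\varphi^k$ converges to $F_2^{-1}{\boldsymbol x}$ (since $F_2$ is a prefix of ${\boldsymbol x} = F_2\bullet{\boldsymbol f}$), and $\varphi^k$ is continuous on $\omega$-words. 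One final pull-through gives $\varphi^k(F_2^{-1}{\boldsymbol x}) = \varphi^k(F_2)^{-1}\varphi^k({\boldsymbol x}) = F_{k+2}^{-1}\varphi^k({\boldsymbol x})$, which is the desired identity.

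The main obstacle is careful bookkeeping around the left-inverse notation $A^{-1}B$: each time $\varphi^k$ is moved through it, one must verify that $A$ is a prefix of $B$. Once the structural identity $\varphi(F_m) = F_{m+1}$ is in hand, the rest of the argument is simply aligning indices in two nested applications of (\ref{parse f}), one for the seed $w$ and one for the seed $F_2$.
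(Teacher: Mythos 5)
Your proof is correct and follows essentially the same route as the paper's: both expand $w\bullet(gf_1\cdots f_n)$ via (\ref{parse f}), split the product after the first $k$ factors to recover $w\bullet g$, use $\varphi^k(F_{j+1})=F_{k+j+1}$ to pull $\varphi^k$ out of the tail and identify it with $F_2^{-1}x_n$ where $x_n=F_2\bullet f_1\cdots f_n$, and then pass to the limit. The only difference is cosmetic: you make explicit the prefix checks needed to move $\varphi^k$ through the left quotients, which the paper leaves implicit.
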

\begin{proof} Write ${\boldsymbol f}=f_1f_2f_3\cdots$ and $g=g_1g_2\cdots g_k$ where the $f_i$, $g_i\in{\mathcal O}$. Suppose that
$F_2\bullet f_1\cdots f_n = x_n$. We will show that
$w\bullet(g_1g_2\cdots g_kf_1f_2\cdots f_n)=(w\bullet g) F_{k+2}^{-1}\varphi^k(x_n)$, and the result follows by taking limits.
From (\ref{parse f}),  
$$x_n=F_2\bullet f_1\cdots f_n = F_2\Pi_{j=1}^n F_{j+1}^{-1}\varphi^{j-1}(F_2\bullet f_j)$$
and 
\begin{align*}
&~~~~~w\bullet(g_1g_2\cdots g_kf_1f_2\cdots f_n)\\
&=w\Pi_{j=1}^k F_{j+1}^{-1}\varphi^{j-1}(F_2\bullet g_j)
\Pi_{j=1}^n F_{k+j+1}^{-1}\varphi^{k+j-1}(F_2\bullet f_j)\\
&=(w\bullet g)
\Pi_{j=1}^n \varphi^k(F_{j+1})^{-1}\varphi^k(\varphi^{j-1}(F_2\bullet f_j))\\
&=(w\bullet g)
\varphi^k(F_2^{-1}F_2\Pi_{j=1}^n F_{j+1}^{-1}\varphi^{j-1}(F_2\bullet f_j))\\
&=(w\bullet g)
\varphi^k(F_2)^{-1}\varphi^k(F_2\Pi_{j=1}^n F_{j+1}^{-1}\varphi^{j-1}(F_2\bullet f_j)))\\
&=(w\bullet g)F_{k+2}^{-1}
\varphi^k(x_n),
\end{align*}
as desired.
\end{proof}

Define sets $F$ and $V$ by
\[F =\alpha^*\beta(\alpha\alpha+\alpha\beta+\beta\alpha\alpha+\beta\beta\beta^*\alpha\alpha)^*(\beta\alpha\beta+\beta\beta\beta^*\alpha\beta), \]
\[V = {\mathcal O}^\omega-{\mathcal O}^*F{\mathcal O}^\omega.\]
\begin{Theorem}\label{fife010}
Let ${\boldsymbol x}\in{\mathcal B}^\omega$.
If ${\boldsymbol x}$ begins with $010$, then ${\boldsymbol x}$ is fb if and only if ${\boldsymbol x}= 010\bullet {\boldsymbol f}$ for some
${\boldsymbol f}\in V$.
\end{Theorem}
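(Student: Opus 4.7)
By Corollary \ref{growing u}, since ${\boldsymbol x}$ starts with $010=F_2$, the shortest prefix of ${\boldsymbol x}$ ending in $F_2$ is $010$ itself; consulting the list of seeds in that corollary forces $w_2=010$, so ${\boldsymbol x}=010\bullet{\boldsymbol f}$ for some ${\boldsymbol f}\in{\mathcal O}^\omega$. The theorem thus reduces to the equivalence that $010\bullet{\boldsymbol f}$ is fb if and only if ${\boldsymbol f}$ contains no factor in $F$.

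For the ``factor-in-$F$ implies bad factor'' direction, I would write ${\boldsymbol f}=hg{\boldsymbol f}''$ with $g\in F$ and set $k=|h|$. Lemma \ref{parse lemma} gives
\[010\bullet{\boldsymbol f}=(010\bullet h)F_{k+2}^{-1}\varphi^k(010\bullet g{\boldsymbol f}''),\]
and since $010\bullet h$ ends in $F_{k+2}$ while $\varphi^k(010\bullet g{\boldsymbol f}'')$ begins with $F_{k+2}$, the word $\varphi^k(010\bullet g{\boldsymbol f}'')$ appears intact as a subword of $010\bullet{\boldsymbol f}$. By the contrapositive of Theorem \ref{phi fb} applied $k$ times, it therefore suffices to exhibit a $4^-$-power (or the factor $11$) inside $010\bullet g$ itself: such a defect is preserved under $\varphi$ and hence survives embedding into $010\bullet{\boldsymbol f}$. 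The remaining work is a case analysis over the regular expression $F=\alpha^*\beta(\alpha\alpha+\alpha\beta+\beta\alpha\alpha+\beta\beta\beta^*\alpha\alpha)^*(\beta\alpha\beta+\beta\beta\beta^*\alpha\beta)$: parametrise each $g\in F$ by its number of leading $\alpha$'s, its sequence of middle blocks, and the lengths of its $\beta\beta\beta^*$ runs, compute $010\bullet g$ explicitly, and identify the period-$p$ factor of length $4p-1$ that each closing $\beta\alpha\beta$ or $\beta\beta\beta^*\alpha\beta$ forces into place.

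For the converse, assume $010\bullet{\boldsymbol f}$ is not fb. The $k=1$ instances of Lemma \ref{parse lemma} yield the identities
\[010\bullet\alpha{\boldsymbol f}'=\varphi(010\bullet{\boldsymbol f}'),\qquad 010\bullet\beta{\boldsymbol f}'=\varphi(0010\bullet{\boldsymbol f}'),\]
the second using $01\cdot\varphi({\boldsymbol v})=\varphi(0{\boldsymbol v})$ together with the identity $0\cdot(010\bullet{\boldsymbol f}')=0010\bullet{\boldsymbol f}'$ (a quick direct check). Applying Theorem \ref{phi fb} lets me strip leading $\alpha$ or $\beta$ from ${\boldsymbol f}$ while tracking which of the four seeds $\{010,0010,1010,10010\}$ listed in Corollary \ref{growing u} is currently active; analogous identities at the other seeds, in which Lemma \ref{allouche}(ii) is genuinely needed to analyse the ``$1\varphi({\boldsymbol v})$''-type factors arising from the seeds with a leading $1$, let me continue the peel until no further reduction preserves non-fb-ness. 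At that final moment the stripped prefix of ${\boldsymbol f}$ must end in a pattern that is precisely an element of $F$.

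The central obstacle is to show that this seed-tracking automaton on $\{\alpha,\beta\}$-letters exactly recognises the complement of $V$: each of the four middle-block shapes $\alpha\alpha$, $\alpha\beta$, $\beta\alpha\alpha$, $\beta\beta\beta^*\alpha\alpha$ and each of the two terminal-block shapes $\beta\alpha\beta$, $\beta\beta\beta^*\alpha\beta$ should correspond to a specific traversal that first falls out of fb-ness, with the $\beta\beta\beta^*$ sub-branches encoding how many consecutive $\beta$'s are needed to force a given seed transition. Verifying the correspondence between the automaton's rejecting traces and the regular expression $F$ is intricate but in principle a finite computation.
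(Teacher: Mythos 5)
Your reduction of the theorem to the claim $W=V$, where $W=\{{\boldsymbol f}\in{\mathcal O}^\omega:010\bullet{\boldsymbol f}\in U\}$, is exactly the paper's, and your peeling identities $010\bullet\alpha{\boldsymbol f}'=\varphi(010\bullet{\boldsymbol f}')$ and $010\bullet\beta{\boldsymbol f}'=\varphi(0(010\bullet{\boldsymbol f}'))$ are correct instances of Lemma~\ref{parse lemma}. But your proof stops where the real work begins: both directions are deferred to ``a case analysis over the regular expression $F$'' and a verification that is ``intricate but in principle a finite computation.'' That computation is the entire content of the paper's proof (Lemma~\ref{identities}): using (\ref{parse}) together with Lemma~\ref{allouche} one shows that the left quotients $x^{-1}W$ take only finitely many values --- for instance $\beta^{-1}W=(\beta\alpha\alpha)^{-1}W=(\beta\alpha\beta)^{-1}W=(\beta\beta\alpha\alpha)^{-1}W=\{{\boldsymbol f}:0(010\bullet{\boldsymbol f})\in U\}$ --- and the automaton $A_{010}$, hence the expression $F$, is \emph{read off} from these identities; the single computation $(\beta\beta\alpha\beta)^{-1}W=\emptyset$ (the corresponding condition is $00{\boldsymbol x}\in U$, impossible since $00{\boldsymbol x}$ begins with the $4^-$-power $000$) then disposes of every word containing a factor of $F$ at once. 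Without establishing these identities you have not proved either inclusion.

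Two of your concrete plans would also need repair before they could be executed. First, exhibiting a $4^-$-power directly inside $010\bullet g$ for each $g\in F$ is not a finite case analysis: $F$ contains the stars $\alpha^*$ and $\beta\beta\beta^*$ and an arbitrary sequence of middle blocks, so the words $g$ form an infinite family whose images $010\bullet g$ depend on the whole history; the only evident way to organize the induction is precisely the quotient computation, which never needs to locate the forced $4^-$-power for a general $g$ at all. Second, your converse direction proposes to track ``which of the four seeds $\{010,0010,1010,10010\}$ is currently active,'' but four states do not suffice: the quotients that actually arise correspond to conditions such as $01{\boldsymbol x}\in U$ and $010{\boldsymbol x}\in U$, i.e.\ to the prefixes $01010$ and $010010$, in which $F_2$ occurs more than once and which are therefore not seeds in the sense of Corollary~\ref{growing u}; the automaton $A_{010}$ has seven states (counting the sink), not four. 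So the state space of your proposed peel is too coarse to close the argument.
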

The set $F$ consists of forbidden factors for words of $V$. 

Let
$W = \{{\boldsymbol f}\in {\mathcal O}^\omega
: 010\bullet{\boldsymbol f}\in U\}$. 
To prove Theorem~\ref{fife010} it is enough to prove that $W = V$.
Let $L\subseteq{\mathcal O}^\omega$
 and let $x\in{\mathcal O}^*$. We define the (left) quotient $x^{-1}L$ by
$x^{-1}L= \{{\boldsymbol y} \in  {\mathcal O}^\omega 
: x{\boldsymbol y} \in  L\}$.
The next lemma establishes several
identities concerning quotients of the set $W$. They are proved using (\ref{parse}) and Lemma~\ref{allouche}. The identities demonstrate that $W$ is precisely the set of infinite labeled paths through the automaton ${A_{010}}$ given in
Figure~\ref{automaton}. These are just the labeled paths omitting factors in $F$, so that  $W = V$. Thus, proving Lemma~\ref{identities} establishes Theorem~\ref{fife010}.

\begin{lemma}\label{identities}The following identities hold:
\begin{enumerate}\item[(a)]$W = \alpha^{-1}W$;
\item[(b)]$\beta^{-1}W=(\beta\alpha\alpha)^{-1}W=(\beta\alpha\beta)^{-1}W^{-1}=(\beta\beta\alpha\alpha)^{-1}W$;
\item[(c)]$(\beta\beta\alpha)^{-1}W=(\beta\beta\beta\alpha)^{-1}W$;
\item[(d)]$(\beta\beta\beta)^{-1}W=(\beta\beta\beta\beta)^{-1}W$;
\item[(e)]$(\beta\beta\alpha\beta)^{-1}W=\emptyset$.
\end{enumerate}
\end{lemma}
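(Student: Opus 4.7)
The plan is to verify each identity by a direct computation using the parsing formula (\ref{parse}) together with Lemma~\ref{allouche}. For every prefix $g$ occurring in the statement I would first compute $010\bullet g$ from the recursive definition of $\alpha,\beta$ and factor it as $p\,F_{|g|+2}$, so that (\ref{parse}) gives $010\bullet g{\boldsymbol f}=p\,\varphi^{|g|}({\boldsymbol x})$ with ${\boldsymbol x}=010\bullet{\boldsymbol f}$. In each case the short word $p$ turns out to be a $\varphi$-image of a shorter word, so using the rewriting rules $\varphi(0{\boldsymbol z})=01\varphi({\boldsymbol z})$ and $\varphi(1{\boldsymbol z})=0\varphi({\boldsymbol z})$ the whole expression can be rewritten in the form $\varphi^{r}(q\,{\boldsymbol y})$ for a short binary word $q$ and a word ${\boldsymbol y}$ obtained from ${\boldsymbol x}$ by a bounded number of $\varphi$-applications and single-letter prepends. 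Lemma~\ref{allouche}(i) then strips the $r$ outer $\varphi$'s, and Lemma~\ref{allouche}(ii) is invoked whenever an expression of the form $1\varphi({\boldsymbol v})$ remains. I would use throughout that ${\boldsymbol x}$ begins with $010$, which rules out several of the resulting sub-conditions.

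Identity (a) is immediate, since $p=\epsilon$ and $010\bullet\alpha{\boldsymbol f}=\varphi({\boldsymbol x})$. For (b), (c), (d), each prefix in the identity reduces, after the above chain of applications of Lemma~\ref{allouche}, to a single common condition on ${\boldsymbol x}$: namely $0{\boldsymbol x}\in U$ for (b), $01{\boldsymbol x}\in U$ for (c), and $010{\boldsymbol x}\in U$ for (d). As a representative computation, $010\bullet\beta\beta\alpha\alpha{\boldsymbol f}=01010\,\varphi^{4}({\boldsymbol x})=\varphi^{2}(1\varphi(1\varphi({\boldsymbol x})))$, and two applications of Lemma~\ref{allouche}(i) followed by one of Lemma~\ref{allouche}(ii) (with the $U_{00}$-alternative excluded because $1\varphi({\boldsymbol x})$ begins with $1$) yield $01\varphi({\boldsymbol x})=\varphi(0{\boldsymbol x})\in U$, i.e.\ $0{\boldsymbol x}\in U$.

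For (e) the analogous reduction gives $010\bullet\beta\beta\alpha\beta{\boldsymbol f}=\varphi^{2}(1\varphi(101\varphi({\boldsymbol x})))$, which by Lemma~\ref{allouche} is in $U$ iff $0101\varphi({\boldsymbol x})=\varphi(00{\boldsymbol x})\in U$, i.e.\ iff $00{\boldsymbol x}\in U$. Since ${\boldsymbol x}$ begins with $0$, the word $00{\boldsymbol x}$ begins with the $4^-$-power $000$, so this condition fails for every ${\boldsymbol f}$; hence $(\beta\beta\alpha\beta)^{-1}W=\emptyset$.

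The hard part is purely combinatorial: computing $010\bullet g$ and $p=(010\bullet g)F_{|g|+2}^{-1}$ for each of the ten or so prefixes involved, and then finding the ``correct'' factorization of $p\,\varphi^{|g|}({\boldsymbol x})$ as $\varphi^{r}(q{\boldsymbol y})$ so that Lemma~\ref{allouche}(ii) can be applied with its $U_{00}$-alternative in the right position. For the longest prefixes ($|g|=4$) one needs $\varphi$-decompositions of words of length up to $18$, e.g.\ $010100100101001010=\varphi(00101001001)$; once those finite computations are done, the chain of equivalences is mechanical.
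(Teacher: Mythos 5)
Your proposal is correct and follows essentially the same route as the paper: both compute $010\bullet g{\boldsymbol f}$ via the parsing identity (\ref{parse}), rewrite the resulting finite prefix as a nested $\varphi$-image, and peel off layers with Lemma~\ref{allouche}, reducing (b), (c), (d) to the common conditions $0{\boldsymbol x}\in U$, $01{\boldsymbol x}\in U$, $010{\boldsymbol x}\in U$ respectively, and (e) to the impossible $00{\boldsymbol x}\in U$. Your representative computations (e.g.\ $01010\,\varphi^{4}({\boldsymbol x})=\varphi^{2}(1\varphi(1\varphi({\boldsymbol x})))$ and the factorization $010100100101001010=\varphi(00101001001)$) match the chains of equivalences in the paper's proof exactly.
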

Each set of identities corresponds to the state of ${A_{010}}$ with the same label as the identities. There are two further states: $(f)$, corresponding to $(\beta\alpha)^{-1}W$, and $(g)$, corresponding to $(\beta\beta)^{-1}W$
The non-accepting sink $(e)$ is not shown in the figure. The automaton $A_{010}$ is not minimal; for example, $(d)$ can be identified with $(g)$. However, this form highlights parallels between it and the three automata we present later.

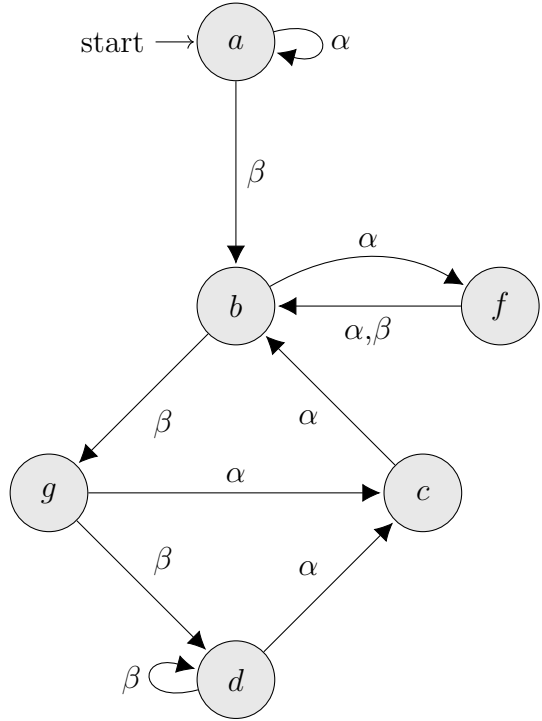
\begin{figure}
\begin{center}
\begin{tikzpicture}[shorten >=1pt,node distance=100pt,auto]

  \tikzstyle{every state}=[fill={rgb:black,1;white,10}]

  \node[state,initial]   (a)                      {$a$};
  \node[state] (b) [below of=a]  {$b$};
  \node[state] (f) [right of=b] {$f$};
  \node[state] (g) [below left of=b]     {$g$};
  \node[state] (c) [below right of=b]     {$c$};
  \node[state] (d) [below left of=c]     {$d$};
  \path[->,arrows = {-Latex[width=7pt, length=7pt]},every loop/.style={arrows = {-Latex[width=7pt, length=7pt]}}]

  (a)   edge [loop right]    node {$\alpha$} (a)
       edge  node {$\beta$} (b)
  (b) edge [bend left] node {$\alpha$} (f)
       edge  node {$\beta$} (g)
  (c) edge   node {$\alpha$} (b)
  (d) edge  node {$\alpha$} (c)
       edge [loop left]    node {$\beta$} (d)  
  (f) edge node {$\alpha$,$\beta$} (b)
  (g) edge  node {$\alpha$} (c)
       edge  node {$\beta$} (d)
 ;
\end{tikzpicture}
\caption{`Fife' automaton ${A_{010}}$ for $U$.}
\label{automaton}
\end{center}
\end{figure}
\begin{proof}\begin{enumerate}
\item[(a)] We have \begin{align*}
&\alpha{\boldsymbol f}\in W\\
\iff& 010\bullet\alpha{\boldsymbol f}\in U\\
\iff&(010\bullet\alpha)\varphi(F_2)^{-1}\varphi({\boldsymbol x})\in U\text{ by (\ref{parse})}\\
\iff& \stkout{01001(01001)}^{-1}\varphi( {\boldsymbol x})\in U\\
\iff& {\boldsymbol x}\in U\text{ by Lemma~\ref{allouche} (i)}\\
\iff& 010\bullet{\boldsymbol f}\in U\\
\iff& {\boldsymbol f}\in W,
\end{align*} 
so that $\alpha^{-1}W=W$.

\item[(b)] Here \begin{align*}
&\beta{\boldsymbol f}\in W\\
\iff& 010\bullet\beta{\boldsymbol f}\in U\\
\iff& (010\bullet\beta)\varphi(F_2)^{-1}\varphi({\boldsymbol x})\in U\\
\iff& 01~\stkout{01001(01001)}^{-1}\varphi({\boldsymbol x})\in U\\
\iff& \varphi(0{\boldsymbol x})\in U\\
\iff& 0{\boldsymbol x}\in U\text{ by Lemma~\ref{allouche} (i)}.
\end{align*}

Similarly we find that \begin{align*}
&\beta\alpha\alpha{\boldsymbol f}\in W\\
\iff& 010\bullet\beta\alpha\alpha{\boldsymbol f}\in U\\
\iff& (010\bullet\beta\alpha\alpha)\varphi^3(F_2)^{-1}\varphi^3({\boldsymbol x})\in U\\
\iff& 01~\stkout{0100101001001(0100101001001)^{-1}}\varphi^3({\boldsymbol x})\in U\\
\iff& \varphi(0\varphi^2({\boldsymbol x}))\in U\\
\iff& 0\varphi^2({\boldsymbol x})\in U\text{ by Lemma~\ref{allouche} (i)}\\
\iff& \varphi(1\varphi({\boldsymbol x}))\in U\\
\iff& 1\varphi({\boldsymbol x})\in U\text{ by Lemma~\ref{allouche} (i)}\\
\iff& 0{\boldsymbol x}\in U\text{ or }{\boldsymbol x}\in U_{00}\text{ by Lemma~\ref{allouche} (ii)}\\
\iff& 0{\boldsymbol x}\in U.
\end{align*}
Here we use the fact that $01$ is a prefix of ${\boldsymbol x}$, so that  ${\boldsymbol x}\not\in U_{00}$. 

Again,\begin{align*}
&\beta\alpha\beta{\boldsymbol f}\in W\\
\iff& 010\bullet\beta\alpha\beta{\boldsymbol f}\in U\\
\iff& (010\bullet\beta\alpha\beta)\varphi^3(F_2)^{-1}\varphi^3({\boldsymbol x})\in U\\
\iff& 01 01001 ~\stkout{0100101001001(0100101001001)^{-1}}\varphi^3({\boldsymbol x})\in U\\
\iff& \varphi(0010\varphi^2({\boldsymbol x}))\in U\\
\iff& \varphi(101\varphi({\boldsymbol x}))\in U\\
\iff& 101\varphi({\boldsymbol x})\in U\\
\iff& 1\varphi(0{\boldsymbol x})\in U\\
\iff& 00{\boldsymbol x}\in U\text{ or }0{\boldsymbol x}\in U_{00}\\
\iff& 0{\boldsymbol x}\in U.
\end{align*}
Here we use the fact that $0$ is a prefix of ${\boldsymbol x}$, so that $00$ is a prefix of  $0{\boldsymbol x}$. 

Finally we get 
\begin{align*}
&\beta\beta\alpha\alpha{\boldsymbol f}\in W\\
\iff& 010\bullet\beta\beta\alpha\alpha{\boldsymbol f}\in U\\
\iff& (010\bullet\beta\beta\alpha\alpha)\varphi^4(F_2)^{-1}\varphi^4({\boldsymbol x})\in U\\
\iff& 01 010 \stkout{0100101001001 01001010(0100101001001 01001010)^{-1}}\varphi^4({\boldsymbol x})\in U\\
\iff& \varphi(001\varphi^3({\boldsymbol x}))\in U\\
\iff& 001\varphi^3({\boldsymbol x})\in U\\
\iff& \varphi(10\varphi^2({\boldsymbol x}))\in U\\
\iff& 10\varphi^2({\boldsymbol x})\in U\\
\iff& 1\varphi(1\varphi({\boldsymbol x}))\in U\\
\iff& 01\varphi({\boldsymbol x})\in U\text{ or }1\varphi({\boldsymbol x})\in U_{001}\\
\iff& 01\varphi({\boldsymbol x})\in U\\
\iff& \varphi(0{\boldsymbol x})\in U\\
\iff& 0{\boldsymbol x}\in U.
\end{align*}
Thus $\beta^{-1}W=(\beta\alpha\alpha)^{-1}W=(\beta\alpha\beta)^{-1}W=(\beta\beta\alpha\alpha)^{-1}W$, as desired.

\item[(c)] Here \begin{align*}
&\beta\beta\alpha{\boldsymbol f}\in W\\
\iff& 010\bullet\beta\beta\alpha{\boldsymbol f}\in U\\
\iff& (010\bullet\beta\beta\alpha)\varphi^3(F_2)^{-1}\varphi^3({\boldsymbol x})\in U\\
\iff& 01010~\stkout{0100101001001(0100101001001)^{-1}}\varphi^3({\boldsymbol x})\in U\\
\iff& \varphi(001\varphi^2({\boldsymbol x}))\in U\\
\iff& 001\varphi^2({\boldsymbol x})\in U\\
\iff& 10\varphi({\boldsymbol x})\in U\\
\iff& 01{\boldsymbol x}\in U\text{ or }1{\boldsymbol x}\in U_{00}\\
\iff& 01{\boldsymbol x}\in U.
\end{align*}

Similarly,
\begin{align*}
&\beta\beta\beta\alpha{\boldsymbol f}\in W\\
\iff& 010\bullet\beta\beta\beta\alpha{\boldsymbol f}\in U\\
\iff& (010\bullet\beta\beta\beta\alpha)\varphi^4(F_2)^{-1}\varphi^4({\boldsymbol x})\in U\\
\iff& 01 010 01001\stkout{0100101001001 01001010(0100101001001 01001010)^{-1}}\varphi^4({\boldsymbol x})\in U\\
\iff& \varphi(001010\varphi^3({\boldsymbol x}))\in U\\
\iff& 001010\varphi^3({\boldsymbol x})\in U\\
\iff& \varphi(1001\varphi^2({\boldsymbol x}))\in U\\
\iff& 1001\varphi^2({\boldsymbol x})\in U\\
\iff& 1\varphi(10\varphi({\boldsymbol x}))\in U\\
\iff& 010\varphi({\boldsymbol x})\in U\text{ or }10\varphi({\boldsymbol x})\in U_{001}\\
\iff& 010\varphi({\boldsymbol x})\in U\\
\iff& \varphi(01{\boldsymbol x})\in U\\
\iff& 01{\boldsymbol x}\in U.
\end{align*}
Thus $(\beta\beta\alpha)^{-1}W=(\beta\beta\beta\alpha)^{-1}W$, as desired.
\item[(d)] We have \begin{align*}
&\beta\beta\beta{\boldsymbol f}\in W\\
\iff& 010\bullet\beta\beta\beta{\boldsymbol f}\in U\\
\iff& (010\bullet\beta\beta\beta)\varphi^3(F_2)^{-1}\varphi^3({\boldsymbol x})\in U\\
\iff& 01 010 01001~\stkout{0100101001001(0100101001001)^{-1}}\varphi^3({\boldsymbol x})\in U\\
\iff& \varphi(001010\varphi^2({\boldsymbol x}))\in U\\
\iff& 001010\varphi^2({\boldsymbol x})\in U\\
\iff& 1001\varphi({\boldsymbol x})\in U\\
\iff& 010{\boldsymbol x}\in U\text{ or }10{\boldsymbol x}\in U_{00}\\
\iff& 010{\boldsymbol x}\in U.
\end{align*}

Similarly,
\begin{align*}
&\beta\beta\beta\beta{\boldsymbol f}\in W\\
\iff& 010\bullet\beta\beta\beta\alpha{\boldsymbol f}\in U\\
\iff& (010\bullet\beta\beta\beta\alpha)\varphi^4(F_2)^{-1}\varphi^4({\boldsymbol x})\in U\\
\iff& 01 010 01001 01001010\stkout{0100101001001 01001010(0100101001001 01001010)^{-1}}\varphi^4({\boldsymbol x})\in U\\
\iff& \varphi(00101001001\varphi^3({\boldsymbol x}))\in U\\
\iff& 00101001001\varphi^3({\boldsymbol x})\in U\\
\iff& \varphi(1001010\varphi^2({\boldsymbol x}))\in U\\
\iff& 1001010\varphi^2({\boldsymbol x})\in U\\
\iff& 1\varphi(1001\varphi({\boldsymbol x}))\in U\\
\iff& 01001\varphi({\boldsymbol x})\in U\text{ or }1001\varphi({\boldsymbol x})\in U_{001}\\
\iff& 01001\varphi({\boldsymbol x})\in U\\
\iff& \varphi(010{\boldsymbol x})\in U\\
\iff& 010{\boldsymbol x}\in U.
\end{align*}
Thus $(\beta\beta\beta)^{-1}W=(\beta\beta\beta\beta)^{-1}W$, as desired.

\item[(e)] We have \begin{align*}
&\beta\beta\alpha\beta{\boldsymbol f}\in W\\
\iff& 010\bullet\beta\beta\alpha\beta{\boldsymbol f}\in U\\
\iff& (010\bullet\beta\beta\alpha\beta)\varphi^4(F_2)^{-1}\varphi^4({\boldsymbol x})\in U\\
\iff& 01 01 0 01 0 01 01 0\stkout{0100101001001 01001010(0100101001001 01001010)^{-1}}\varphi^4({\boldsymbol x})\in U\\
\iff& \varphi(00101001\varphi^3({\boldsymbol x}))\in U\\
\iff& 00101001\varphi^3({\boldsymbol x})\in U\\
\iff& \varphi(10010\varphi^2({\boldsymbol x}))\in U\\
\iff& 10010\varphi^2({\boldsymbol x})\in U\\
\iff& 1\varphi(101\varphi({\boldsymbol x}))\in U\\
\iff& 0101\varphi({\boldsymbol x})\in U\text{ or }101\varphi({\boldsymbol x})\in U_{001}\\
\iff& 0101\varphi({\boldsymbol x})\in U\\
\iff& \varphi(00{\boldsymbol x})\in U\\
\iff& 00{\boldsymbol x}\in U.
\end{align*}
However, ${\boldsymbol x}$ has prefix $0$, so $00{\boldsymbol x}$ has the $4^-$-power $000$ as a prefix. Therefore, $00{\boldsymbol x}\not\in U.$ It follows that $(\beta\beta\alpha\beta)^{-1}W=\emptyset$.
\end{enumerate}
\end{proof}

\begin{remark} We mention without proof that $$\beta\alpha\bullet{\boldsymbol f}\in W\iff 1{\boldsymbol f}\in V,$$ and $$\beta\beta\bullet{\boldsymbol f}\in W\iff 10{\boldsymbol f}\in V.$$ We do not need these equivalences to formulate the automaton.
\end{remark}
\begin{remark}
As Fife's Theorem features a forbidden factor characterization, we have given such a characterization for $V$. In fact, however, all information about $V$ is captured in the automaton $A_{010}$. 
For a finite string $g\in{\mathcal O}^*$, word $010\bullet g$ is fb exactly when $g$ can be walked on the automaton $A_{010}$; such a string $g$ never encounters the (undepicted) non-acepting sink $(e)$, which can only be reached via $(c)$ on input $\beta$. If ${\boldsymbol f}\in{\mathcal O}^\omega$,  word $010\bullet {\boldsymbol f}$ is fb exactly when $010\bullet f$ is fb for finite prefix $f$ of  ${\boldsymbol f}$.

It is routine to write down an expression for the regular language of finite words arriving at the sink, which is $F{\mathcal O}^*$. It happens $F$ will take us from any given state to the sink.   For this reason, $010\bullet g$ is fb exactly when $g$ has no {\em factor} in $F$. 
\end{remark}

For $u\in\{010,0010,1010,10010\}$, let
$W_u = \{{\boldsymbol f}\in {\mathcal O}^\omega
: u\bullet{\boldsymbol f}\in U\}$. Using the same method as in Lemma~\ref{identities}, one shows that $W_u$ is the subset of ${\mathcal O}^\omega$ which can be walked on $A_u$, where the additional automata are depicted in Figures~\ref{automaton2}, \ref{automaton3}, and \ref{automaton4}.

\begin{figure}
\begin{center}
\begin{tikzpicture}[shorten >=1pt,node distance=100pt,auto]

  \tikzstyle{every state}=[fill={rgb:black,1;white,10}]

  \node[state,initial]   (a)                      {$a$};
  \node[state] (b) [below of=a]  {$b$};
  \node[state] (f) [right of=b] {$f$};
  \node[state] (g) [below left of=b]     {$g$};
  \node[state] (c) [below right of=b]     {$c$};
  \node[state] (d) [below left of=c]     {$d$};
  \path[->,arrows = {-Latex[width=7pt, length=7pt]},every loop/.style={arrows = {-Latex[width=7pt, length=7pt]}}]

  (a)   edge     node {$\alpha$} (f)
       edge  node {$\beta$} (g)
  (b) edge [bend left] node {$\alpha$} (f)
       edge  node {$\beta$} (g)
  (c) edge   node {$\alpha$} (b)
  (d) edge  node {$\alpha$} (c)
       edge [loop left]    node {$\beta$} (d)  
  (f) edge node {$\alpha$,$\beta$} (b)
  (g) edge  node {$\alpha$} (c)
       edge  node {$\beta$} (d)
 ;
\end{tikzpicture}
\caption{`Fife' automaton ${A_{0010}}$ for $W_{0010}$.}
\label{automaton2}
\end{center}
\end{figure}
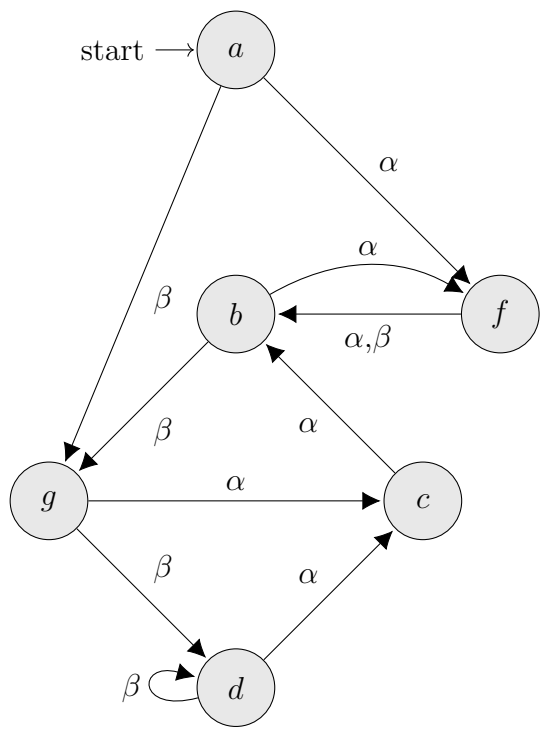

\begin{figure}
\begin{center}
\begin{tikzpicture}[shorten >=1pt,node distance=100pt,auto]

  \tikzstyle{every state}=[fill={rgb:black,1;white,10}]

  \node[state,initial]   (a)                      {$a$};
  \node[state] (b) [below of=a]  {$b$};
  \node[state] (f) [right of=b] {$f$};
  \node[state] (g) [below left of=b]     {$g$};
  \node[state] (c) [below right of=b]     {$c$};
  \node[state] (d) [below left of=c]     {$d$};
  \path[->,arrows = {-Latex[width=7pt, length=7pt]},every loop/.style={arrows = {-Latex[width=7pt, length=7pt]}}]

  (a)   edge [bend left] node {$\beta$} (b)
		edge [bend right] node {$\alpha$} (b)
  (b) edge [bend left] node {$\alpha$} (f)
       edge  node {$\beta$} (g)
  (c) edge   node {$\alpha$} (b)
  (d) edge  node {$\alpha$} (c)
       edge [loop left]    node {$\beta$} (d)  
  (f) edge node {$\alpha$,$\beta$} (b)
  (g) edge  node {$\alpha$} (c)
       edge  node {$\beta$} (d)
 ;
\end{tikzpicture}
\caption{`Fife' automaton ${A_{1010}}$ for $U_{1010}$.}
\label{automaton3}
\end{center}
\end{figure}

\begin{figure}
\begin{center}
\begin{tikzpicture}[shorten >=1pt,node distance=100pt,auto]

  \tikzstyle{every state}=[fill={rgb:black,1;white,10}]

  \node[state,initial]   (a)                      {$a$};
  \node[state] (d) [left of=a]     {$d$};
  \node[state] (g) [above left of=d]     {$g$};
  \node[state] (c) [above right of=d]     {$c$};
  \node[state] (b) [above right of=g]  {$b$};
  \node[state] (f) [right of=b] {$f$};

  \path[->,arrows = {-Latex[width=7pt, length=7pt]},every loop/.style={arrows = {-Latex[width=7pt, length=7pt]}}]

  (a)   edge   node {$\alpha$} (c)
       edge  [bend left] node {$\beta$} (d)
  (b) edge [bend left] node {$\alpha$} (f)
       edge  node {$\beta$} (g)
  (c) edge   node {$\alpha$} (b)
  (d) edge  node {$\alpha$} (c)
       edge [loop left]    node {$\beta$} (d)  
  (f) edge node {$\alpha$,$\beta$} (b)
  (g) edge  node {$\alpha$} (c)
       edge  node {$\beta$} (d)
 ;
\end{tikzpicture}
\caption{`Fife' automaton ${A_{10010}}$ for $U_{10010}$.}
\label{automaton4}
\end{center}
\end{figure}

\section{Lexicographically extremal fb words}
The lexicographic order on binary words is given recursively by
\[u< v\iff v\ne \epsilon\mbox{ and }((u=\epsilon)\mbox{ or }(u^-<v^-)\mbox{ or }((u=u^-0)\mbox{ and }(v=u^-1))).\]
Note that the morphism $\varphi$ is order-reversing: Let $u$ and $v$ be non-empty binary words so that $u<v$. Write $u=u'0u^{\prime\prime}$, $v=u'1v^{\prime\prime}$ where $u'$ is the longest common prefix of $u$ and $v$. Then $\varphi(u')01$ is a prefix of $\varphi(u)$, while $\varphi(u')00$ is a prefix of $\varphi(v)$, so that $\varphi(u)>\varphi(v)$. 

For each non-negative integer $n$, let $\ell_n$ (resp., $m_n$) be the lexicographically least (resp., greatest) word of length $n$ such that $\ell_n$ (resp., $m_n$) is the prefix of an fb $\omega$-word. 

\begin{lemma}\label{ell_n prefix} Let $n$ be a non-negative integer. Word $\ell_n$ is a prefix of $\ell_{n+1}$. Word $m_n$ is a prefix of $m_{n+1}$. 
\end{lemma}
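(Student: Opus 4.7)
The proof has two symmetric parts, so I would prove only the statement for $\ell_n$ in detail and remark that the $m_n$ case follows by reversing every inequality. The approach is a short contradiction argument built from the extremal definitions of $\ell_n$ and $\ell_{n+1}$ together with the fact that, on words of a common fixed length, lexicographic order is preserved by appending letters on the right.

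Concretely, I would begin by extracting a letter $a \in \mathcal{B}$ such that $\ell_n a$ is itself the prefix of a fb $\omega$-word: since $\ell_n$ is the prefix of some fb $\omega$-word $\boldsymbol{w}'$, take $a$ to be the $(n+1)$-th letter of $\boldsymbol{w}'$. Then $\ell_n a$ is a candidate for the minimization defining $\ell_{n+1}$, so $\ell_{n+1} \le \ell_n a$. In the other direction, write $\ell_{n+1} = y_1 y_2 \cdots y_{n+1}$ and observe that $y_1 \cdots y_n$ is a prefix of whatever fb $\omega$-word extends $\ell_{n+1}$, so it is a candidate for $\ell_n$, giving $\ell_n \le y_1 \cdots y_n$.

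The key step is then to rule out strict inequality. Suppose for contradiction that $\ell_n < y_1 \cdots y_n$. Both words have length $n$, so they first disagree at some position $i \le n$ with a $0$ in $\ell_n$ and a $1$ in $y_1 \cdots y_n$. Appending the letter $a$ to $\ell_n$ and the letter $y_{n+1}$ to $y_1 \cdots y_n$ does not alter positions up to $n$, so this first discrepancy still occurs at position $i$ and yields $\ell_n a < y_1 \cdots y_{n+1} = \ell_{n+1}$, contradicting $\ell_{n+1} \le \ell_n a$ from the previous paragraph. Hence $\ell_n = y_1 \cdots y_n$, which is exactly the claim that $\ell_n$ is a prefix of $\ell_{n+1}$.

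The $m_n$ statement is handled by the mirror argument: choose an $a$ so that $m_n a$ is extendable to an fb $\omega$-word (giving $m_{n+1} \ge m_n a$), and use $m_n \ge y_1 \cdots y_n$ where now $m_{n+1} = y_1 \cdots y_{n+1}$; strict inequality leads to $m_n a > m_{n+1}$ by the same length-$(n+1)$ comparison, a contradiction. There is no real obstacle in the argument; the statement is essentially a textbook consequence of the definitions of $\ell_n$ and $m_n$, and its role in the paper is to legitimize taking the limits $\lim_n \ell_n$ and $\lim_n m_n$ that must appear in the subsequent characterization of the lex-extremal fb $\omega$-words.
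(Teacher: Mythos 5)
Your proposal is correct and follows essentially the same route as the paper: both establish $\ell_{n+1}\le \ell_n a$ and $\ell_n\le \ell_{n+1}^-$ from the extremal definitions, then derive a contradiction from strict inequality by noting that a strict comparison of the length-$n$ prefixes forces the same strict comparison of the length-$(n+1)$ words. The paper phrases this last step via its recursive definition of lexicographic order ($p^-<q^-$ implies $p<q$) rather than via the first position of disagreement, but the content is identical.
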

\begin{proof} We prove the result for the $\ell_n$; the proof for the $m_n$ is similar. Let $\ell_n{\bf r}$ be an fb $\omega$-word. Let $p$ be the length $n+1$ prefix of
$\ell_n{\bf r}$, and let $q$ be the length $n$ prefix of
$\ell_{n+1}$. We need to show that $q=\ell_n$. Both $p$ and $q$ are prefixes of fb $\omega$-words. By definition we have $\ell_{n+1}\le p$ and $\ell_n\le q$. If $\ell_n<q$, then $p^-=\ell_n<q=\ell_{n+1}^-$, so that $p<\ell_{n+1}$. This is a contradiction. Therefore $\ell_n= q$, as desired.
\end{proof}

Let ${\bf {\boldsymbol\ell}}=lim_{n\rightarrow\infty}\ell_n$, ${\bf m}=lim_{n\rightarrow\infty}m_n$.

\begin{lemma} Word ${\bf {\boldsymbol\ell}}$ is the lexicographically least fb $\omega$-word. Word ${\bf m}$ is the lexicographically greatest fb $\omega$-word. 
\end{lemma}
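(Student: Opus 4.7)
The plan is to verify three things about ${\boldsymbol\ell}$: that the limit actually defines an $\omega$-word; that this $\omega$-word is itself fb; and that it is lexicographically least among all fb $\omega$-words. The first point is immediate from Lemma~\ref{ell_n prefix}, which says $\ell_n$ is a prefix of $\ell_{n+1}$, so that the $\ell_n$ form a compatible chain with a well-defined limit ${\boldsymbol\ell}\in{\mathcal B}^\omega$. (Note that $\ell_n$ and $m_n$ exist for every $n$ because the Fibonacci word ${\boldsymbol\phi}$ is fb, supplying a prefix of every length.)

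For the second point, I would use that fb-ness is a ``local'' property: an $\omega$-word is fb exactly when each of its finite prefixes avoids $11$ and avoids $4^-$-powers. The length-$n$ prefix of ${\boldsymbol\ell}$ is exactly $\ell_n$, which by definition is a prefix of \emph{some} fb $\omega$-word and therefore inherits both avoidance properties. Hence ${\boldsymbol\ell}$ is fb.

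For the third point, let ${\boldsymbol w}$ be any fb $\omega$-word and let $w_n$ denote its length-$n$ prefix. Since $w_n$ is a prefix of the fb word ${\boldsymbol w}$, the definition of $\ell_n$ gives $\ell_n\le w_n$ lexicographically for every $n$. If ${\boldsymbol\ell}={\boldsymbol w}$ we are done; otherwise let $k$ be the first index at which they differ, so that $\ell_k$ and $w_k$ agree on their first $k-1$ letters and differ in the last. The relation $\ell_k\le w_k$ then forces the $k$th letter of ${\boldsymbol\ell}$ to be $0$ and the $k$th letter of ${\boldsymbol w}$ to be $1$, whence ${\boldsymbol\ell}<{\boldsymbol w}$. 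The proof for ${\bf m}$ is the mirror image, reversing ``least'' and ``$\le$'' throughout.

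I do not expect any real obstacle: the substantive content is entirely carried by Lemma~\ref{ell_n prefix} (compatibility of the $\ell_n$) together with the existence of at least one fb $\omega$-word. After that, the statement is a routine unwinding of the definition of lexicographic order on ${\mathcal B}^\omega$.
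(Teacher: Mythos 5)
Your proposal is correct and follows essentially the same prefix-by-prefix comparison as the paper's own proof. The one difference is that you also explicitly verify that ${\boldsymbol\ell}$ is itself an fb $\omega$-word (via the locality of the fb property) and that each $\ell_n$ exists (via ${\boldsymbol\phi}$), two points the paper leaves implicit; these are worthwhile additions but do not change the substance of the argument.
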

\begin{proof} We show that ${\bf {\boldsymbol\ell}}$ is lexicographically least. The proof that ${\bf m}$ is lexicographically greatest is similar. Let ${\bf w}$ be an fb $\omega$-word. For each $n$ let $w_n$ be the length $n$ prefix of ${\bf w}$, so that ${\bf w}=\lim_{n\rightarrow\infty}w_n$. 

If for some $n$ we have $w_n>\ell_n$, then ${\bf w}>{\bf {\boldsymbol\ell}}$. 

Otherwise $w_n\le \ell_n$ for all $n$. By the definition of the $\ell_n$ we have  $w_n\ge \ell_n$, so that $w_n= \ell_n$ for all $n$. Thus ${\bf w}=\lim_{n\rightarrow\infty}w_n=\lim_{n\rightarrow\infty}\ell_n={\bf {\boldsymbol\ell}}$. 

In all cases we find ${\bf w}\ge{\bf {\boldsymbol\ell}}$.
\end{proof}

\begin{lemma}\label{least} We have ${\boldsymbol\ell}=\varphi({\bf m})$. 
\end{lemma}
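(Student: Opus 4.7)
The plan is to apply Theorem~\ref{main theorem} to write ${\boldsymbol\ell}$ in the form $\varphi({\boldsymbol u})$ for some fb $\omega$-word ${\boldsymbol u}$, then show that $\varphi$ is order-reversing on distinct $\omega$-words, and conclude that ${\boldsymbol u}$ must be the lex greatest fb $\omega$-word ${\bf m}$.

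First I would observe that, since the Fibonacci word ${\boldsymbol \phi}$ is fb (by the corollary to Theorem~\ref{phi fb}) and begins with $0$, we have $\ell_1 = 0$, so ${\boldsymbol\ell}$ begins with $0$. Theorem~\ref{main theorem} then lets us write ${\boldsymbol\ell} = a\varphi({\boldsymbol u})$ for some fb $\omega$-word ${\boldsymbol u}$ and some $a \in \{\epsilon, 1\}$; since ${\boldsymbol\ell}$ begins with $0$, the prefix $a$ must be empty, giving ${\boldsymbol\ell} = \varphi({\boldsymbol u})$.

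Second I would verify that $\varphi$ is order-reversing on distinct $\omega$-words, by essentially the same argument the paper already uses for finite words. If ${\boldsymbol u}' < {\boldsymbol u}''$ are distinct $\omega$-words, write ${\boldsymbol u}' = p0{\boldsymbol x}$ and ${\boldsymbol u}'' = p1{\boldsymbol y}$, where $p$ is the longest common prefix. Then $\varphi({\boldsymbol u}')$ has prefix $\varphi(p)01$, while $\varphi({\boldsymbol u}'')$ has prefix $\varphi(p)0$ followed by the image of a non-empty word; this image necessarily begins with $0$, since both $\varphi(0) = 01$ and $\varphi(1) = 0$ begin with $0$. Thus $\varphi({\boldsymbol u}'') < \varphi({\boldsymbol u}')$.

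Finally, if ${\boldsymbol u} \ne {\bf m}$, then by the maximality of ${\bf m}$ we have ${\boldsymbol u} < {\bf m}$, so the order-reversing property gives $\varphi({\bf m}) < \varphi({\boldsymbol u}) = {\boldsymbol\ell}$. But $\varphi({\bf m})$ is fb by Theorem~\ref{phi fb} and begins with $0$, which contradicts the minimality of ${\boldsymbol\ell}$. Hence ${\boldsymbol u} = {\bf m}$ and ${\boldsymbol\ell} = \varphi({\bf m})$. The only step requiring any care is the order-reversing property for $\omega$-words, but it follows immediately from the observation that every letter maps under $\varphi$ to a word starting with $0$; no real obstacle arises.
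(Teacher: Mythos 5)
Your proof is correct and follows essentially the same route as the paper's: factor ${\boldsymbol\ell}$ as $\varphi$ of an fb word via Theorem~\ref{main theorem} (the paper notes $\ell_2=00$, you note $\ell_1=0$; either rules out the prefix $a=1$), then use the order-reversing property of $\varphi$ to force that word to be ${\bf m}$. Your explicit verification that order-reversal extends to $\omega$-words is a small but welcome addition, since the paper only proves it for finite words before applying it here.
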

\begin{proof} Since the Fibonacci word ${\boldsymbol \phi}$ has suffixes beginning with 00, $\ell_2=00$, and we can write ${\boldsymbol\ell}=\varphi({\bf m'})$ for some ${\bf m'}$ by Theorem~\ref{main theorem}. Since ${\boldsymbol\ell}$ is fb, ${\bf m'}$ is fb by Lemma~\ref{phi^-1 fb}. It follows that ${\bf m'}\le {\bf m}$. However if ${\bf m'}< {\bf m}$ then $\varphi({\bf m})<\varphi({\bf m'})={\boldsymbol\ell}$ since $\varphi$ is order-reversing. This is impossible, since ${\boldsymbol\ell}$ is least. Therefore ${\bf m'}= {\bf m}$, and ${\boldsymbol\ell}=\varphi({\bf m})$.
\end{proof}

\begin{lemma}\label{greatest} We have ${\bf m}=1\varphi({\boldsymbol\ell})$.
\end{lemma}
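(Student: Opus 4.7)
The plan is to mirror the proof of Lemma~\ref{least}: factorize ${\bf m}$ using Theorem~\ref{main theorem} and then exploit the order-reversing nature of $\varphi$ against the minimality of ${\boldsymbol\ell}$.

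First I would verify that $1\varphi({\boldsymbol\ell})$ is fb. From the proof of Lemma~\ref{least} we have $\ell_2 = 00$, so $\varphi({\boldsymbol\ell})$ has prefix $\varphi(00) = 0101$. Theorem~\ref{phi fb} tells us that $\varphi({\boldsymbol\ell})$ is fb, and Lemma~\ref{10101} then yields that $1\varphi({\boldsymbol\ell})$ is fb. In particular ${\bf m} \geq 1\varphi({\boldsymbol\ell})$, so ${\bf m}$ begins with $1$, and Theorem~\ref{main theorem} allows me to write ${\bf m} = 1\varphi({\boldsymbol u})$ for some fb $\omega$-word ${\boldsymbol u}$.

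By the minimality of ${\boldsymbol\ell}$ we have ${\boldsymbol u} \geq {\boldsymbol\ell}$. On the other hand, from ${\bf m} \geq 1\varphi({\boldsymbol\ell})$ we read off $1\varphi({\boldsymbol u}) \geq 1\varphi({\boldsymbol\ell})$, hence $\varphi({\boldsymbol u}) \geq \varphi({\boldsymbol\ell})$, and since $\varphi$ reverses lexicographic order this forces ${\boldsymbol u} \leq {\boldsymbol\ell}$. Together these give ${\boldsymbol u} = {\boldsymbol\ell}$, so ${\bf m} = 1\varphi({\boldsymbol\ell})$.

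The only step requiring genuine care is verifying that $1\varphi({\boldsymbol\ell})$ is fb: prepending a $1$ could in principle create a new $4^-$-power at the start, but because ${\boldsymbol\ell}$ begins with $00$ we are exactly in the setting of Lemma~\ref{10101}. The rest is a short bookkeeping argument combining Theorem~\ref{main theorem}, the minimality of ${\boldsymbol\ell}$, and the order-reversing property of $\varphi$.
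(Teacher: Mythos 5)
Your proof is correct and follows essentially the same route as the paper's: establish that $1\varphi({\boldsymbol\ell})$ is fb via $\ell_2=00$ and Lemma~\ref{10101}, factor ${\bf m}$ as $1\varphi({\boldsymbol u})$ by Theorem~\ref{main theorem}, and pin down ${\boldsymbol u}={\boldsymbol\ell}$ using the order-reversing property of $\varphi$ together with the extremality of ${\boldsymbol\ell}$ and ${\bf m}$. The only cosmetic difference is that you deduce that ${\bf m}$ begins with $1$ from ${\bf m}\ge 1\varphi({\boldsymbol\ell})$, whereas the paper observes directly that $m_5=10101$; both are fine.
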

\begin{proof} Since 00 is a prefix of ${\boldsymbol \ell}$ we see that 0101 is a prefix of 
$\varphi({\boldsymbol\ell})$. It follows from Lemma~\ref{10101} that $1\varphi({\boldsymbol\ell})$ is fb.  Since 10101 is the lexicographically greatest fb word of length 5, $m_5=10101$. It follows that we can write ${\boldsymbol m}=
1\varphi({\boldsymbol \ell'})$ for some fb word ${\boldsymbol \ell'}$. However, $\varphi$ is order reversing, so that if 
${\boldsymbol \ell'}>{\boldsymbol \ell}$, then $1\varphi({\boldsymbol \ell})>1\varphi({\boldsymbol \ell'})={\boldsymbol m}$, contradicting the maximality of ${\boldsymbol m}$. Thus ${\boldsymbol \ell'}={\boldsymbol \ell}$ and ${\bf m}=1\varphi({\boldsymbol \ell})$.
\end{proof}

\begin{Theorem}\label{lex least} Word ${\boldsymbol m}$ satisfies
\begin{equation}\label{m}{\boldsymbol m}=1\varphi^2({\boldsymbol m}).\end{equation}
Word ${\boldsymbol \ell}$ satisfies
\begin{equation}{\boldsymbol \ell}=0\varphi^2({\boldsymbol \ell}).\end{equation}
\end{Theorem}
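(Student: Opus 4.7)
The plan is to derive both identities by chaining together Lemma~\ref{least} and Lemma~\ref{greatest}, which respectively give ${\boldsymbol\ell}=\varphi({\bf m})$ and ${\bf m}=1\varphi({\boldsymbol\ell})$. Nothing further is really needed; the identities in the theorem are just the two-step versions of these one-step relations. The main (minor) obstacle is merely bookkeeping: applying $\varphi$ to a word beginning with the letter $1$ and using $\varphi(1)=0$ correctly.

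For the identity on ${\boldsymbol m}$, I would substitute the expression ${\boldsymbol\ell}=\varphi({\boldsymbol m})$ from Lemma~\ref{least} into the expression ${\boldsymbol m}=1\varphi({\boldsymbol\ell})$ from Lemma~\ref{greatest}, obtaining
\[
{\boldsymbol m}=1\varphi({\boldsymbol\ell})=1\varphi(\varphi({\boldsymbol m}))=1\varphi^2({\boldsymbol m}).
\]

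For the identity on ${\boldsymbol\ell}$, I would proceed in the opposite direction, substituting ${\boldsymbol m}=1\varphi({\boldsymbol\ell})$ from Lemma~\ref{greatest} into ${\boldsymbol\ell}=\varphi({\boldsymbol m})$ from Lemma~\ref{least}. Since $\varphi$ is a morphism and $\varphi(1)=0$, this gives
\[
{\boldsymbol\ell}=\varphi({\boldsymbol m})=\varphi(1\varphi({\boldsymbol\ell}))=\varphi(1)\,\varphi^2({\boldsymbol\ell})=0\varphi^2({\boldsymbol\ell}),
\]
which is the claimed identity. Both parts of the theorem thus follow immediately from the two preceding lemmas.
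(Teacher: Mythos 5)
Your proposal is correct and matches the paper's proof, which likewise derives both identities by composing Lemma~\ref{least} and Lemma~\ref{greatest} (the paper simply states that the theorem follows from these two lemmas, while you have written out the substitutions, including the key computation $\varphi(1\varphi({\boldsymbol\ell}))=0\varphi^2({\boldsymbol\ell})$). No gaps.
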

\begin{proof}
This follows from  Lemma~\ref{least} and Lemma~\ref{greatest}.
\end{proof}
\begin{lemma} Neither of ${\boldsymbol m}$ and ${\boldsymbol \ell}$ is the fixed point of a binary morphism. Every factor of ${\boldsymbol \phi}$ is a factor of ${\boldsymbol m}$ and ${\boldsymbol \ell}$, but there are infinitely many factors of ${\boldsymbol m}$ (resp., ${\boldsymbol \ell}$) which are not factors of ${\boldsymbol \phi}$ or ${\boldsymbol \ell}$ (resp., ${\boldsymbol m}$). \end{lemma}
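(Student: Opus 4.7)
The lemma has three parts; I will use the decompositions of ${\boldsymbol m}$ and ${\boldsymbol\ell}$ as products of Fibonacci words to handle them in order.

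Iterating the identities ${\boldsymbol m}=1\varphi^2({\boldsymbol m})$ and ${\boldsymbol\ell}=0\varphi^2({\boldsymbol\ell})$ of Theorem~\ref{lex least}, and using $\varphi^n(0)=F_n$ together with $\varphi^{2k}(1)=F_{2k-1}$, gives
\[{\boldsymbol m}=1\cdot F_1\cdot F_3\cdot F_5\cdots,\qquad {\boldsymbol\ell}=0\cdot F_2\cdot F_4\cdot F_6\cdots.\]
Both sequences contain $F_n$ as a factor for all $n$, and since ${\boldsymbol\phi}=\lim_n F_n$ every finite factor of ${\boldsymbol\phi}$ lies in both ${\boldsymbol m}$ and ${\boldsymbol\ell}$; this settles the first claim. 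For the distinguishing factors I will take $10101=m_5$ on the ${\boldsymbol m}$-side and $001001001$ on the ${\boldsymbol\ell}$-side. The factor $10101$ is absent from the Sturmian word ${\boldsymbol\phi}$ (direct inspection of its six length-$5$ factors) and from ${\boldsymbol\ell}$, because every $1$ in ${\boldsymbol\ell}=\varphi({\boldsymbol m})$ is the second letter of a $\varphi(0)=01$-block, so any occurrence of $10101$ in ${\boldsymbol\ell}$, necessarily preceded by a $0$, would parse as $\varphi(000)=010101$ and plant the $4^-$-power $000$ in ${\boldsymbol m}$. The factor $001001001$ is a prefix of ${\boldsymbol\ell}$ (since $0\cdot F_2\cdot F_4=001001001010$) and equals $\varphi(101010)$; because every $00$ in the range of $\varphi$ must sit across a $\varphi(1)=0$ block boundary, an occurrence of $001001001$ in ${\boldsymbol\phi}=\varphi({\boldsymbol\phi})$ or in $\varphi({\boldsymbol\ell})$ (the tail of ${\boldsymbol m}=1\varphi({\boldsymbol\ell})$) must be $\varphi$-aligned and force $101010$—hence the forbidden $10101$—into ${\boldsymbol\phi}$ or ${\boldsymbol\ell}$. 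Every prefix of ${\boldsymbol m}$ of length $\geq 5$ contains $10101$ and every prefix of ${\boldsymbol\ell}$ of length $\geq 9$ contains $001001001$, so there are infinitely many distinguishing factors on each side.

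For non-morphicity I will strengthen the witnesses to uniqueness: $10101$ occurs in ${\boldsymbol m}$ only at position $1$, and $001001001$ occurs in ${\boldsymbol\ell}$ only at position $1$. Neither lies inside any single $F_n$ (by the Sturmian inventory used above), so an alternative occurrence would have to straddle a boundary $F_n\cdot F_{n+2}$ of the decomposition; for $n\geq 3$ these boundaries read uniformly $\ldots 1001\cdot 0100\ldots$ ($n$ odd) or $\ldots 1010\cdot 0100\ldots$ ($n$ even), because $F_n$ ends in $F_{n-2}$ while $F_{n+2}$ begins with $F_{n+1}$ which begins with $F_n$. A short scan of these neighborhoods, together with a direct check of the four small initial boundaries $1\cdot F_1$, $F_1\cdot F_3$, $0\cdot F_2$, $F_2\cdot F_4$, yields uniqueness. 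Suppose then that $h({\boldsymbol m})={\boldsymbol m}$ for a binary morphism $h$ prolongable at $1$. The iterates $h^n(1)$ are nested prefixes of ${\boldsymbol m}$ with $|h^n(1)|\to\infty$, so eventually $h^n(1)$ has $10101$ as its own prefix; but $h^n({\boldsymbol m})={\boldsymbol m}$ then plants a copy of $h^n(1)$—and hence of $10101$—at each of the infinitely many positions where $1$ appears in ${\boldsymbol m}$, contradicting uniqueness. The same argument handles ${\boldsymbol\ell}$.

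The main obstacle is the uniqueness claim: the $F_n$-boundary analysis is routine once the boundary shapes above are identified, but the four small initial boundaries must each be checked individually, and the parsing arguments that translate occurrences of $10101$ and $001001001$ back through $\varphi$ require that these factors always align with $\varphi$-block boundaries—the only delicate point in the proof.
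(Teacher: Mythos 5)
Your proposal is correct and follows essentially the same route as the paper: a marker factor occurring only at position $1$ rules out being a morphic fixed point, the block decomposition ${\boldsymbol m}=1\,\varphi^2(1)\,\varphi^4(1)\cdots$ (resp.\ for ${\boldsymbol\ell}$) yields every factor of ${\boldsymbol\phi}$, and occurrences of $10101$ and its $\varphi$-images supply the distinguishing factors. The one place you diverge is the step you flag as the main obstacle: you propose to prove uniqueness of the occurrence of $10101$ in ${\boldsymbol m}$ (and of $001001001$ in ${\boldsymbol\ell}$) by scanning the boundaries $F_n\cdot F_{n+2}$ of the decomposition, and you leave that scan as a sketch. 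This work is unnecessary: Lemma~\ref{internal 10101} already states that $10101$ is not a factor of $^-{\boldsymbol w}$ for any fb $\omega$-word ${\boldsymbol w}$ (an internal occurrence extends to the $4^-$-power $0101010$), which applied to ${\boldsymbol m}$ gives your uniqueness claim immediately, and combined with your own $\varphi$-alignment argument it gives the corresponding uniqueness in ${\boldsymbol\ell}$; the paper proceeds exactly this way. Your choice of witnesses for the ``infinitely many'' claim (all sufficiently long prefixes, each containing the single marker) differs harmlessly from the paper's choice ($\varphi^{2k}(10101)$ for all $k$); both are valid.
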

\begin{proof} Word ${\boldsymbol m}$ has prefix 10101, but by Lemma~\ref{internal 10101}, the word 10101 is not a factor of $^-{\boldsymbol m}$. It follows that ${\boldsymbol m}$ cannot be the fixed point of a binary morphism. Similarly, $\varphi(10101)$ is a prefix of ${\boldsymbol \ell}$, but not a factor of $^-{\boldsymbol \ell}$, so that ${\boldsymbol \ell}$ is not a fixed point of a binary morphism.

Every factor of ${\boldsymbol \phi}$ is a factor of $\varphi^{2k}(0)$ for some $k$, and is therefore a factor of 
\[{\boldsymbol m}=1\varphi^2({\boldsymbol m})=1\varphi^2(1\varphi^2({\boldsymbol m}))=\cdots
=  1\varphi^2(1)\varphi^4(1)\cdots\varphi^{2k-2}(1)\varphi^{2k}({\boldsymbol m})\] 
Similarly, every factor of ${\boldsymbol \phi}$ is a factor of ${\boldsymbol \ell}$.
However, none of factors $\varphi^{2k}(10101)$ of ${\boldsymbol m}$ (resp., $\varphi^{2k+1}(10101)$ of ${\boldsymbol \ell}$) is a factor of ${\boldsymbol \phi}$ or ${\boldsymbol \ell}$ (resp, ${\boldsymbol m}$).
\end{proof}
\section{Acknowledgment}
The work of James D.\ Currie is supported by the Natural Sciences and Engineering Research Council of Canada (NSERC),
[funding reference number 2017-03901].
The work of Narad Rampersad is supported by the Natural Sciences and Engineering Research Council of Canada (NSERC),
[funding reference number 2019-04111].

\end{document}